\def\@cite#1#2{[{{\bfseries #1}\if@tempswa , #2\fi}]}
\renewcommand{\section}{%
\@startsection{section}{1}{\z@}
{0.5truecm plus -1ex minus -.2ex}%
{1.0ex plus .2ex}{\bfseries\large}}
\def\@seccntformat#1{\csname the#1\endcsname.\ }
\numberwithin{equation}{section} 
\newtheorem{thm}{Theorem}[section]
\newtheorem{corollary}[thm]{Corollary}
\newtheorem{lem}[thm]{Lemma}
\theoremstyle{definition}
\newtheorem{remark}{Remark}[section]
\newcommand{\ep}{\varepsilon}
\newcommand{\pa}{\partial}
\newcommand{\Rn}{\mathbb{R}^n}
\newcommand{\uast}{u^{\ast}}
\newcommand{\vast}{v^{\ast}}
\newcommand{\wast}{w^{\ast}}
\newcommand{\ubar}{\overline{u}}
\newcommand{\vbar}{\overline{v}}
\newcommand{\wbar}{\overline{w}}
\newcommand{\Rone}{\mathbb{R}}
\newcommand{\ol}[1]{\overline{#1}}
\newcommand{\tmax}{T_{\rm max}}
\newcommand{\lp}[2]{\Vert{#2}\Vert_{L^{#1}(\Omega)}}
\newcommand{\wmp}[2]{\Vert{#2}\Vert_{W^{#1}(\Omega)}}
\newcommand{\cd}{(\cdot,t)}
\newcommand{\into}{\int_\Omega}
\begin{document}
\footnote[0]
    {2010{\it Mathematics Subject Classification}\/. 
    Primary: 35K51; Secondary: 92C17, 35B40.
    }
\footnote[0]
    {\it Key words and phrases\/: 
    chemotaxis; 
    Lotka--Volterra; 
    global existence; stabilization. 
    }
\begin{center}
    \Large{{\bf Boundedness and stabilization 
    in a two-species  
    chemotaxis-competition system
    of parabolic-parabolic-elliptic type
    }}
\end{center}
\vspace{5pt}
\begin{center}
    Masaaki Mizukami 
   \footnote[0]{
    E-mail: 
    {\tt masaaki.mizukami.math@gmail.com} 
    }\\
    \vspace{12pt}
    Department of Mathematics, 
    Tokyo University of Science\\
    1-3, Kagurazaka, Shinjuku-ku, 
    Tokyo 162-8601, Japan\\
    \vspace{2pt}
\end{center}
\begin{center}    
    \small \today
\end{center}

\vspace{2pt}
\newenvironment{summary}
{\vspace{.5\baselineskip}\begin{list}{}{%
     \setlength{\baselineskip}{0.85\baselineskip}
     \setlength{\topsep}{0pt}
     \setlength{\leftmargin}{12mm}
     \setlength{\rightmargin}{12mm}
     \setlength{\listparindent}{0mm}
     \setlength{\itemindent}{\listparindent}
     \setlength{\parsep}{0pt}
     \item\relax}}{\end{list}\vspace{.5\baselineskip}}
\begin{summary}
{\footnotesize {\bf Abstract.}
This paper deals with the two-species 
chemotaxis-competition system 
\begin{equation*}
  \begin{cases}
    u_t=d_1\Delta u - \chi_1\nabla \cdot (u \nabla w)
    +\mu_1 u(1-u-a_1 v) 
    & {\rm in} \ \Omega \times (0,\infty), \\
    v_t=d_2\Delta v - \chi_2\nabla \cdot (v \nabla w)
    +\mu_2 v(1-a_2u-v) 
    & {\rm in} \ \Omega \times (0,\infty), \\
    0=d_3\Delta w + \alpha u + \beta v - \gamma w  
    & {\rm in} \ \Omega \times (0,\infty),
   \end{cases}
\end{equation*}
where $\Omega$ is a bounded domain in $\Rn$ 
with smooth boundary $\pa \Omega$, $n\ge 2$; 
$\chi_i$ and $\mu_i$ are constants satisfying 
some conditions. The above system was 
studied in the cases that 
$a_1,a_2\in (0,1)$ and $a_1>1>a_2$, 
and it was proved that 
global existence and asymptotic stability hold 
when $\frac{\chi_i}{\mu_i}$ are small 
(\cite{Black-Lankeit-Mizukami_01,stinner_tello_winkler,Tello_Winkler_2012}). 
However, the conditions in the above two cases 
strongly depend on $a_1,a_2$, and 
have not been obtained in the case that $a_1,a_2\ge 1$. 
Moreover, convergence rates in the cases 
that $a_1,a_2\in (0,1)$ and $a_1 > 1 > a_2$ 
have not been studied. 
The purpose of this work is to construct conditions 
which derive 
global existence of classical bounded solutions 
for all $a_1,a_2>0$ 
which covers the case that $a_1,a_2 \ge 1$, 
and lead to convergence rates 
for solutions of the above system 
in the cases that 
$a_1,a_2\in (0,1)$ and $a_1\ge 1 >a_2$. 
}
\end{summary}
\vspace{10pt}

\newpage
%
%

\section{Introduction}

Many phenomena, 
which appear in natural science, especially, 
biology, chemistry and physics, 
support animals' 
lives. 
In this paper we focus on 
{\it chemotaxis} 
which is one of the important 
properties and is related to e.g., movement of sperm, 
migration of neurons or lymphocytes and tumor invasion. 
Chemotaxis is the property such that species move 
towards higher concentration of the chemical substance 
when they plunge into hunger. 

A mathematical problem which describes a part of the 
life cycle of cellular slime molds with chemotaxis 
is called the Keller--Segel system: 
\[
u_t = \Delta u - \chi \nabla \cdot (u\nabla v), 
\quad 
\tau v_t = \Delta v + u -v, 
\]
where $\chi>0$ and $\tau\in\{0,1\}$. 
Moreover, the chemotaxis system 
with growth terms 
\[
u_t = \Delta u + \chi \nabla \cdot (u\nabla v) 
      + \kappa u-\mu u^2,
\quad 
\tau v_t = \Delta v + u -v 
\] 
was proposed by \cite{mimura_tsujikawa,OTYM}, 
where $\chi,\kappa,\mu>0$ and $\tau\in \{0,1\}$. 
After the pioneering work of Keller--Segel \cite{K-S}, 
the Keller--Segel system and 
the chemotaxis system are intensively studied 
(see e.g., \cite{B-B-T-W,Hillen_Painter_2009,Horstmann_2003}). 
A generalized problem 
of Keller--Segel systems, 
which means a two-species chemotaxis system, 
was proposed in \cite{wolansky} 
and also has studied 
(see e.g., 
\cite{biler_espejo_guerra,
biler_guerra,
Conca-Espejo,
Conca-Espejo-Viliches,
espejo_stevens_velazquez_simultaneous,
lili, 
Zhang-Li_2014}; 
global existence 
was proved in \cite {Conca-Espejo,Conca-Espejo-Viliches,Zhang-Li_2014}; 
and thier asymptotic stability was shown in 
\cite{Zhang-Li_2014}; related works which deal with 
blow-up of solutions can be seen in 
\cite{biler_espejo_guerra,
biler_guerra,
Conca-Espejo,
Conca-Espejo-Viliches,
espejo_stevens_velazquez_simultaneous,
lili}). 
Recently, a two-species chemotaxis system with 
competitive kinetics 
\begin{gather*}
    u_t=\Delta u - \chi_1\nabla \cdot (u \nabla w)
    +\mu_1 u(1-u-a_1 v), 
\\
    v_t=\Delta v - \chi_2\nabla \cdot (v \nabla w)
    +\mu_2 v(1-a_2u-v),
\\
    \tau w_t = \Delta w + \alpha u + \beta v - \gamma w 
\end{gather*}
with some $\chi_1,\chi_2,\mu_1,\mu_2,a_1,a_2 >0$ 
and $\tau \in \{0,1\}$, 
which describes the evolution of 
two competing species which react 
on a single chemoattractant, 
was proposed 
by Tello--Winkler \cite{Tello_Winkler_2012} 
and was studied 
(see \cite{Bai-Winkler_2016,Lin-Mu-Wang,Mizukami_AIMSmath,Mizukami_DCDSB,Mizukami-Yokota_01,N-T_SIAM,N-T_JDE,Zhang-Li_2015}). 
About this problem 
with $\tau=1$, 
global existence and boundedness 
was obtained in the 2-dimensional 
case (\cite{Bai-Winkler_2016}) 
and the $n$-dimensional setting (\cite{Lin-Mu-Wang}); 
moreover, asymptotic behavior of solutions 
was established in \cite{Bai-Winkler_2016, Mizukami_DCDSB}. 
Related works which dealt with global existence and 
boundedness in this two-species problem 
with sensitivity functions 
can be found in \cite{Mizukami_DCDSB,Zhang-Li_2015}; 
and related works which treated 
the non-competition case are in \cite{Mizukami_AIMSmath,Mizukami-Yokota_01,N-T_SIAM,N-T_JDE}. 
These results in the case $\tau =1$ 
are motivated by the results (\cite{Black-Lankeit-Mizukami_01,
stinner_tello_winkler,
Tello_Winkler_2012}) in the case $\tau=0$. 
Therefore the parabolic-parabolic-elliptic problem 
reduced by letting $\tau=0$ 
seems to be helpful to analyze the fully parabolic case.  

%
%
In this paper we consider 
the two-species chemotaxis 
system with competitive kinetics 
of parabolic-parabolic-elliptic type
\begin{equation}\label{cp}
  \begin{cases}
    u_t=d_1\Delta u - \chi_1\nabla \cdot (u \nabla w)
    +\mu_1 u(1-u-a_1 v), 
    & x\in\Omega,\ t>0, 
\\[1mm]
    v_t=d_2\Delta v - \chi_2\nabla \cdot (v \nabla w)
    +\mu_2 v(1-a_2u-v), 
    & x\in\Omega,\ t>0,  
\\[1mm]	
    0=d_3\Delta w + \alpha u + \beta v - \gamma w, 
    & x\in\Omega,\ t>0, 
\\[1mm]
    \nabla u\cdot \nu=\nabla v\cdot \nu = \nabla w\cdot \nu = 0, 
    & x\in\pa \Omega,\ t>0,
\\[1mm]
    u(x,0)=u_0(x),\; v(x,0)=v_0(x),
    & x\in\Omega,
  \end{cases}
\end{equation}
where $\Omega$ is a bounded domain in $\Rn$ 
($n\ge 2$) 
with smooth boundary $\pa \Omega$ 
and $\nu$ 
is 
the 
outward normal vector to $\pa\Omega$. 
The constants $d_1,d_2,d_3,\chi_1,\chi_2,\mu_1,\mu_2, 
a_1,a_2$ 
and $\alpha,\beta,\gamma$ 
are positive. 
The initial data $u_0,v_0$ are
assumed to be nonnegative functions.
The unknown functions $u(x,t)$ and $v(x,t)$  
represent the population densities of 
two species and 
$w(x,t)$ shows the concentration of the 
chemical substance 
at place $x$ and time $t$. 

The problem \eqref{cp} is a problem 
on account of the influence of chemotaxis, diffusion, 
and the Lotka--Volterra competitive kinetics, 
i.e., with coupling coefficients $a_1,a_2>0$ in 
\begin{align}\label{L-V}
  u_t = u(1-u-a_1 v), \quad v_t = v(1-a_2u-v). 
\end{align}
The mathematical difficulties of the problem \eqref{cp} 
are to deal with the chemotaxis term $\nabla \cdot (u\nabla w)$ 
and the competition term $u (1-u-a_1v)$. 
To overcome these difficulties, 
in the case that $a_1,a_2\in (0,1)$ and 
$d_3=\alpha=\beta=1$ in \eqref{cp}, 
Tello--Winkler \cite{Tello_Winkler_2012} applied 
comparison methods to this problem and obtained 
global existence of classical bounded solutions 
and their asymptotic behavior 
under the conditions that 
\begin{align}\label{condition;pre1}
2(\chi_1+\chi_2) + a_2\mu_1  < \mu_2
\quad \mbox{and}\quad 
2(\chi_1 +\chi_2) + a_1\mu_2  < \mu_1. 
\end{align}
However, if $\chi_1\to 0$ or $\mu_1\to \infty$, 
then these conditions break down. 
Recently, it was shown that the conditions 
\begin{align}\label{condition;pre2}
  &\frac{\chi_1}{\mu_1} 
  \in \left[0,\frac{d_3}{2\alpha}\right)
      \cap \left[0,\frac{a_1d_3}{\beta}\right), 
  \quad
  \frac{\chi_2}{\mu_2} 
  \in \left[0,\frac{d_3}{2\beta}\right)
      \cap \left[0,\frac{a_2d_3}{\alpha}\right), 
\\\label{condition;pre3}
  &a_1a_2d_3^2 < 
  \left(d_3-\frac{2\alpha\chi_1}{\mu_1}\right)
  \left(d_3-\frac{2\beta\chi_2}{\mu_2}\right)
\end{align}
lead to global existence and asymptotic stability 
in \eqref{cp} in the case that $a_1,a_2\in (0,1)$ 
(\cite{Black-Lankeit-Mizukami_01}). 
The conditions \eqref{condition;pre2}--\eqref{condition;pre3} 
partially relax \eqref{condition;pre1} 
in view of the point mentioned above. 
On the other hand, 
in the case that $a_1>1>a_2$ and $d_3=\beta=1$ in \eqref{cp} 
Stinner--Tello--Winkler \cite{stinner_tello_winkler} 
established global existence and stabilization 
of global classical solutions when 
\begin{align*} 
  &\frac{\chi_1}{\mu_1}\le a_1, \quad 
  \frac{\chi_2}{\mu_2}< \frac 12 
  \quad \mbox{and}
\\ 
  &\frac{\alpha \chi_1}{\mu_1} 
  + \max\left\{
  \frac{\chi_2}{\mu_2}, 
  \frac{a_2(\mu_2-\chi_2)}{\mu_2-2\chi_2}, 
  \frac{(\alpha - a_2)\chi_2}{\mu_2-2\chi_2}
  \right\} 
  <1 
\end{align*} 
are satisfied. 
In summary the two-species chemotaxis-competition model 
\eqref{cp} were studied in the cases that 
$a_1,a_2\in (0,1)$ and $a_1>1>a_2$, 
and it was proved that 
global existence and same asymptotic behavior as 
solutions to the 
Lotka--Volterra competition model \eqref{L-V} 
hold when $\frac{\chi_i}{\mu_i}$ are small. 
However, the conditions in the above two cases 
strongly depend on $a_1,a_2$, and 
have not been obtained in the case that $a_1,a_2\ge 1$. 
Moreover, convergence rates in the cases 
that $a_1,a_2\in (0,1)$ and $a_1 > 1> a_2$ 
have not been studied. 

The purpose of this work is to construct conditions 
which derive 
global existence of classical bounded solutions 
for all $a_1,a_2>0$ 
which covers the case that $a_1,a_2 \ge 1$, 
and lead to convergence rates 
for solutions of \eqref{cp} 
in the cases that 
$a_1,a_2\in (0,1)$ and $a_1 > 1 > a_2$. 


%
%
%
For establishing global existence and boundedness 
we shall suppose 
that $\chi_1,\chi_2$ and $\mu_1,\mu_2$ satisfy 
the following conditions: 
\begin{align}\label{condition;bounded}
  \frac{\chi_1}{\mu_1} < \frac{nd_3}{n-2}
  \min\left\{\frac{1}{\alpha},\frac{a_1}{\beta}\right\} 
  \quad 
  \mbox{and} 
  \quad 
  \frac{\chi_2}{\mu_2} < \frac{nd_3}{n-2}
  \min\left\{\frac{1}{\beta},\frac{a_2}{\alpha}\right\}.
\end{align}
We assume that the initial data $u_0, v_0$ 
satisfy
\begin{align}\label{ini} 
0\leq u_0\in C(\ol{\Omega})\setminus \{0\}, \quad
0\leq v_0\in C(\ol{\Omega})\setminus \{0\}. 
\end{align} 

%
%
Now the main results read as follows. 
The first one is concerned 
with global existence and 
boundedness in \eqref{cp}.  
%
\begin{thm}\label{mainth} 
 Let $d_1,d_2,d_3> 0$, $\mu_1,\mu_2 > 0$, 
 $a_1,a_2> 0$, $\chi_1,\chi_2>0$, 
 $\alpha,\beta,\gamma>0$ and 
 let $\Omega \subset \Rn$ $(n\ge 2)$ be a bounded domain 
 with smooth boundary. 
 Assume that \eqref{condition;bounded} are satisfied. 
 Then for any $u_0, v_0$ 
 satisfying \eqref{ini} with some $q>n$, 
 there exists an exactly one pair $(u,v,w)$ of 
 nonnegative functions 
 \begin{align*} 
   &u,\; v\in C(\ol{\Omega}\times [0,\infty))\cap 
   C^{2,1}(\ol{\Omega}\times (0,\infty)), 
   \\
   &w\in C(\ol{\Omega}\times [0,\infty))\cap 
   C^{2,1}(\ol{\Omega}\times (0,\infty))\cap 
   L^\infty_{\rm loc}([0,\infty);W^{1,q}(\Omega)), 
 \end{align*} 
 which satisfy \eqref{cp}. 
%
Moreover, the solutions $u,v,w$ are 
uniformly bounded, 
i.e., there exists a constant $C>0$ such that
\begin{align*}
  \lp{\infty}{u\cd}+\lp{\infty}{v\cd}+
  \wmp{1,q}{w\cd}\leq C\quad 
  \mbox{for all}\ t\geq 0, 
\end{align*}
and the solutions $u,v,w$ are 
the H\"older continuous functions, i.e., 
there exist $\theta\in(0,1)$ and $M>0$ such that 
\begin{align*}
   \|u\|_{C^{\theta,\frac{\theta}{2}}
   (\ol{\Omega}\times[t,t+1])}
   +\|v\|_{C^{\theta,\frac{\theta}{2}}
   (\ol{\Omega}\times[t,t+1])}
   +\|w\|_{C^{\theta,\frac{\theta}{2}}
   (\ol{\Omega}\times[t,t+1])}\leq M
   \quad \mbox{for all}\ t\geq 1.
\end{align*}
\end{thm}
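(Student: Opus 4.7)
The plan is to first establish local-in-time existence on a maximal interval $[0,\tmax)$ by a standard fixed-point argument adapted to the parabolic-parabolic-elliptic setting (cf.\ \cite{Bai-Winkler_2016,Tello_Winkler_2012}), together with the usual extensibility criterion that $\tmax<\infty$ forces $\lp{\infty}{u\cd}+\lp{\infty}{v\cd}\to\infty$. To preclude blow-up I would then bootstrap $L^{1}\to L^{p}\to L^{\infty}$: an $L^{1}$ bound for $u,v$ follows directly from integrating the first two equations and exploiting the self-limiting quadratic kinetics; this I would upgrade to a uniform $L^{p}$-bound for some $p>n/2$ via a testing argument, then lift to $L^{\infty}$ by semigroup or Moser techniques, and finally translate into H\"older estimates via classical parabolic regularity.

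The crucial step is the $L^{p}$ estimate, and here \eqref{condition;bounded} enters essentially. Multiplying the $u$-equation by $u^{p-1}$, integrating by parts, and substituting the identity $d_{3}\Delta w=\gamma w-\alpha u-\beta v$ into the chemotactic term produces, after dropping the favourably signed $-\tfrac{\chi_{1}\gamma(p-1)}{pd_{3}}\into u^{p}w$-contribution,
\begin{align*}
  &\tfrac{1}{p}\tfrac{d}{dt}\into u^{p} + \tfrac{4(p-1)d_{1}}{p^{2}}\into|\nabla u^{p/2}|^{2} \\
  &\quad + \Bigl(\mu_{1}-\tfrac{\chi_{1}\alpha(p-1)}{pd_{3}}\Bigr)\into u^{p+1} + \Bigl(\mu_{1}a_{1}-\tfrac{\chi_{1}\beta(p-1)}{pd_{3}}\Bigr)\into u^{p}v \;\le\; \mu_{1}\into u^{p},
\end{align*}
and an analogous inequality for $v$. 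The four bracketed coefficients (two above, two in the $v$-analogue) are simultaneously positive exactly when $\tfrac{p-1}{p}<\min\!\bigl(\tfrac{\mu_{1}d_{3}}{\chi_{1}\alpha},\tfrac{\mu_{1}a_{1}d_{3}}{\chi_{1}\beta},\tfrac{\mu_{2}d_{3}}{\chi_{2}\beta},\tfrac{\mu_{2}a_{2}d_{3}}{\chi_{2}\alpha}\bigr)$; the content of \eqref{condition;bounded} is precisely that this minimum strictly exceeds $\tfrac{n-2}{n}$, so one can select a single $p>n/2$ meeting the requirement. Then $\mu_{1}\into u^{p}$ is absorbed into the $\into u^{p+1}$-term via Young's inequality, producing an ODI $\tfrac{d}{dt}\into u^{p}+c\into u^{p}\le C$ and thus the desired uniform $L^{p}$-bound.

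Once $u,v\in L^{\infty}(0,\tmax;L^{p}(\Omega))$ with $p>n/2$ is secured, $L^{p}$-elliptic regularity for the third equation gives $w\in L^{\infty}(0,\tmax;W^{2,p}(\Omega))\hookrightarrow L^{\infty}(0,\tmax;W^{1,r}(\Omega))$ for some $r>n$, placing $\nabla w$ uniformly in a supercritical Lebesgue space. Inserting this into the variation-of-constants formula for $u,v$ and invoking $L^{p}$--$L^{q}$ smoothing of the Neumann heat semigroup in the standard Winkler-type fashion, one raises the integrability of $u,v$ iteratively up to $L^{\infty}$; the claimed bound $\lp{\infty}{u\cd}+\lp{\infty}{v\cd}+\wmp{1,q}{w\cd}\le C$ then follows by a final application of elliptic regularity. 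The H\"older estimate on $\ol{\Omega}\times[t,t+1]$ is subsequently deduced from standard parabolic H\"older theory (e.g.\ Ladyzhenskaya--Solonnikov--Ural'ceva) applied to the $u$- and $v$-equations, whose coefficients are now bounded, with the regularity of $w$ then following from elliptic Schauder estimates.

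The main obstacle is the very first derivation of an $L^{p}$-bound with $p>n/2$: one must find a single exponent $p$ that simultaneously dominates all four chemotaxis-versus-kinetics mismatches appearing above. This is the algebraic requirement encoded in \eqref{condition;bounded}, and the novelty of the present theorem lies in not needing any smallness on $a_{1},a_{2}$ (contrast \eqref{condition;pre1}--\eqref{condition;pre3}), so the regime $a_{1},a_{2}\ge 1$ is captured. Once the $L^{p}$-step is in hand, the remainder of the argument is technical but standard.
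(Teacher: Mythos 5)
Your proposal is correct and follows essentially the same route as the paper: the key step is the same substitution of $d_3\Delta w=\gamma w-\alpha u-\beta v$ into the chemotactic term of the $\into u^p$ (and $\into v^p$) identity, with \eqref{condition;bounded} guaranteeing precisely that some $p>\tfrac n2$ makes all four resulting coefficients positive, followed by an ODI for $\into u^p$, elliptic regularity for $w$, Neumann-semigroup smoothing to reach $L^\infty$, and standard parabolic/elliptic regularity for the H\"older bounds. The only cosmetic differences are that the paper closes the $L^p$ ODI via H\"older's inequality (obtaining a superlinear absorption term) rather than Young's, and runs the $L^p\to L^\infty$ step as a single fixed-point-type estimate on $\sup_t\lp{\infty}{u\cd}$ rather than an iteration.
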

%
\begin{remark}
This result give the existence of global classical 
bounded solutions in the case that $a_1,a_2\ge 1$. 
Moreover, 
the condition \eqref{condition;bounded} relaxes 
\eqref{condition;pre2} which assumed for 
global existence of solutions in \cite{Black-Lankeit-Mizukami_01}. 
Indeed, if $\chi_1,\chi_2$ and $\mu_1,\mu_2$ 
satisfy the condition \eqref{condition;pre2}, 
then $\chi_1,\chi_2$ and $\mu_1,\mu_2$ 
satisfy the condition \eqref{condition;bounded}. 
However, the condition \eqref{condition;bounded} 
does not always relax 
those assumed in \cite{stinner_tello_winkler} 
and \cite{Tello_Winkler_2012}; in the case that 
$a_1,a_2\in (0,1)$ the condition 
\eqref{condition;bounded} 
relaxes \eqref{condition;pre1} 
under the condition 
\begin{align*}
  \chi_1 < \frac{2na_1(\chi_1+\chi_2)(1+a_1)}{(n-2)(1-a_1a_2)} 
  \quad \mbox{and}\quad 
  \chi_2 < \frac{2na_2(\chi_1+\chi_2)(1+a_2)}{(n-2)(1-a_1a_2)}, 
\end{align*} 
and in the case that $a_1> 1 >a_2$ 
the condition \eqref{condition;bounded} relaxes 
the condition 
\begin{align*}
  \frac{\alpha \chi_1}{\mu_1} + \frac{\chi_2}{\mu_2} < 1, 
\end{align*}
which was used to obtain global existence 
in \cite{stinner_tello_winkler}, 
when 
\[
  \frac{\alpha (n-2)}{n}
  < 
  \min\{
  1,
  a_1\alpha, 
  a_2 
  \}
\]
hold. 
\end{remark}

The main theorem tells us the following result 
in the 2-dimensional case. 

\begin{corollary}
 Let $d_1,d_2,d_3> 0$, $\mu_1,\mu_2 > 0$, 
 $a_1,a_2> 0$, $\chi_1,\chi_2>0$, 
 $\alpha,\beta,\gamma>0$ and 
 let $\Omega \subset \mathbb{R}^2$ 
 be a bounded domain 
 with smooth boundary. 
 Then for any $u_0,v_0$ satisfying \eqref{ini} 
 with some $q>n$, \eqref{cp} possesses a unique 
 global bounded classical solution. 
\end{corollary}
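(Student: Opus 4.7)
The plan is to observe that the corollary is an immediate specialization of Theorem \ref{mainth} to $n=2$, since in this borderline dimension the smallness condition \eqref{condition;bounded} imposes no restriction at all on the parameters.

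More precisely, I would argue as follows. When $n=2$, the factor $\frac{nd_3}{n-2}$ appearing in \eqref{condition;bounded} is to be understood as $+\infty$ (formally, one may take a limit $n \searrow 2$ in the statement, or simply observe that the proof of Theorem \ref{mainth} in the 2D case requires no upper bound on $\chi_i/\mu_i$). Consequently, the inequalities
\[
  \frac{\chi_1}{\mu_1} < \frac{nd_3}{n-2}\min\left\{\frac{1}{\alpha},\frac{a_1}{\beta}\right\},
  \qquad
  \frac{\chi_2}{\mu_2} < \frac{nd_3}{n-2}\min\left\{\frac{1}{\beta},\frac{a_2}{\alpha}\right\}
\]
hold automatically for any positive choice of $d_3,\chi_i,\mu_i,\alpha,\beta,a_i$. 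Hence Theorem \ref{mainth} applies with no extra hypothesis and delivers the asserted global bounded classical solution, together with its uniqueness.

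The only subtlety I would address explicitly is a comment on the convention $\frac{nd_3}{n-2}=\infty$ at $n=2$, so that the reader sees why \eqref{condition;bounded} degenerates into a void restriction; after that, no new estimates are required. There is no genuine obstacle here, as the corollary is purely a dimensional remark: the whole work is carried out by Theorem \ref{mainth}, and all the nontrivial analysis (Moser-type iterations, quasi-energy inequalities, H\"older regularity) is inherited from its proof.
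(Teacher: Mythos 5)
Your proposal is correct and matches the paper's (implicit) reasoning: the corollary is stated as an immediate consequence of Theorem \ref{mainth}, the point being exactly that for $n=2$ the factor $\frac{nd_3}{n-2}$ is to be read as $+\infty$ (equivalently, the intervals $I_1,I_2$ in Lemmas \ref{lem;Lp;u}--\ref{lem;Lp;v} are automatically nonempty since the positive parts in their definitions make the upper endpoints infinite or strictly larger than $1=\frac n2$), so \eqref{condition;bounded} is vacuous and no further argument is needed.
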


In the case $a_1,a_2\in(0,1)$ 
asymptotic behavior of solutions to \eqref{cp} 
will be discussed under the following additional 
conditions: 
there exists $\delta_1>0$ such that 
\begin{align}\label{case1condition1}
  &4\delta_1-a_1a_2(1+\delta_1)^2>0
  \end{align}
  and 
  \begin{align}\label{case1condition2}
  &\mu_1
  >
  \frac{\chi_1^2(1+\delta_1)(1-a_1)(\alpha^2a_1\delta_1
    +\beta^2a_2-\alpha\beta a_1a_2(1+\delta_1))}
  {4a_1d_1d_3\gamma(1-a_1a_2)(4\delta_1-a_1a_2(1+\delta_1)^2)},
\\\label{case1condition3}
    &\mu_2 > 
    \frac{\chi_2^2(1+\delta_1) 
    (1-a_2)(\alpha^2a_1\delta_1 
    +\beta^2a_2-\alpha \beta a_1a_2(1+\delta_1))} 
    {4a_2d_2d_3\gamma(1-a_1a_2)(4\delta_1-a_1a_2(1+\delta_1)^2)}. 
  \end{align}
The second theorem gives asymptotic behavior 
in \eqref{cp} 
in the case $a_1,a_2\in (0,1)$. 
\begin{thm}\label{mainth2} 
Let $d_1,d_2,d_3> 0$, $\mu_1,\mu_2 > 0$, 
$a_1,a_2\in (0,1)$, $\chi_1,\chi_2>0$, 
$\alpha,\beta,\gamma>0$ and 
let $\Omega \subset \Rn$ $(n\ge 2)$ be a bounded domain 
with smooth boundary. 
Assume that there exists a unique global 
classical solution $(u,v,w)$ of \eqref{cp} 
satisfying 
\begin{align*}
 \|u\|_{C^{\theta,\frac{\theta}{2}}
 (\ol{\Omega}\times[t,t+1])} 
 + \|v\|_{C^{\theta,\frac{\theta}{2}}
 (\ol{\Omega}\times[t,t+1])} 
 + \|w\|_{C^{\theta,\frac{\theta}{2}}
 (\ol{\Omega}\times[t,t+1])}
 \le M
 \quad \mbox{for all}\ t>0
\end{align*}
with some $M>0$. 
Then 
under the conditions 
\eqref{case1condition1}{\rm --}\eqref{case1condition3}, 
$(u,v,w)$ satisfies that there exist $C>0$ 
and $\ell>0$ such that 
\begin{align*}
 \|u\cd-u^\ast\|_{L^{\infty}(\Omega)} 
 +\|v\cd-v^\ast\|_{L^{\infty}(\Omega)} 
 +\|w\cd-w^\ast\|_{L^{\infty}(\Omega)} 
 \leq Ce^{-\ell t} 
 \quad 
  \mbox{for all}\ t>0, 
\end{align*} 
where 
\[
  u^\ast:=\frac{1-a_1}{1-a_1a_2},
\quad 
  v^\ast:=\frac{1-a_2}{1-a_1a_2}
\quad 
  w^\ast := \frac{\alpha \uast +\beta \vast}{\gamma}.
\]
\end{thm}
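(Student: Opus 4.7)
The approach will be to construct an entropy-type Lyapunov functional whose dissipation, after the chemotactic contributions are absorbed via the elliptic equation for $w$, controls $\lp{2}{u\cd-\uast}^2+\lp{2}{v\cd-\vast}^2$ from above. This yields $L^2$-convergence, which is then upgraded to exponential $L^\infty$-convergence by means of the uniform H\"older bound that is part of the hypothesis. The strategy parallels that of \cite{Black-Lankeit-Mizukami_01}, the extra degree of freedom here being the parameter $\delta_1$ which encodes the choice of the Young-inequality split.

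First I would introduce
\[
 E(t):=\into\Bigl(u-\uast-\uast\ln\tfrac{u}{\uast}\Bigr)\,dx+\kappa\into\Bigl(v-\vast-\vast\ln\tfrac{v}{\vast}\Bigr)\,dx,
\]
with a weight $\kappa>0$ proportional to $1+\delta_1$ (to be fixed so that the structure of \eqref{case1condition1}--\eqref{case1condition3} matches the positivity condition below). Testing the first two equations of \eqref{cp} against $\ln(u/\uast)$ and $\kappa\ln(v/\vast)$ respectively and integrating by parts decomposes $E'(t)$ into a nonpositive diffusion contribution $-d_1\uast\into\frac{|\nabla u|^2}{u^2}-\kappa d_2\vast\into\frac{|\nabla v|^2}{v^2}$, a chemotactic cross term $\chi_1\uast\into\frac{\nabla u\cdot\nabla w}{u}+\kappa\chi_2\vast\into\frac{\nabla v\cdot\nabla w}{v}$, and a pointwise Lotka--Volterra contribution $-\into Q(u-\uast,v-\vast)\,dx$ in which $Q$ is a quadratic form with diagonal coefficients $\mu_1,\kappa\mu_2$ and mixed coefficient $\mu_1 a_1+\kappa\mu_2 a_2$.

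Second I would absorb the chemotactic cross term: Young's inequality bounds it by a small fraction of the diffusion plus a multiple of $\into|\nabla w|^2$, and testing the elliptic equation against $w-\wast$ and applying one more Young step gives $\into|\nabla w|^2\le C_1\into(u-\uast)^2+C_2\into(v-\vast)^2$ with explicit $C_i=C_i(\alpha,\beta,\gamma,d_3)$. Substituting yields
\[
 \tfrac{d}{dt}E(t)\le -\into\widetilde Q(u-\uast,v-\vast)\,dx,
\]
where $\widetilde Q$ is the form $Q$ with its diagonal entries shifted by explicit corrections proportional to $\chi_1^2/(d_1 d_3)$ and $\chi_2^2/(d_2 d_3)$. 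Condition \eqref{case1condition1} is precisely what makes the unperturbed $Q$ positive-definite for the canonical choice of $\kappa$, while \eqref{case1condition2}--\eqref{case1condition3} ensure that the two chemotactic corrections do not destroy this positive-definiteness; together they yield $\widetilde Q(\xi,\eta)\ge c_0(\xi^2+\eta^2)$ for some $c_0>0$.

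The previous inequality together with nonnegativity of $E$ gives $\int_0^\infty(\lp{2}{u\cd-\uast}^2+\lp{2}{v\cd-\vast}^2)\,dt<\infty$, which combined with the assumed H\"older bound and a contradiction/Arzel\`a--Ascoli argument yields $\lp{\infty}{u\cd-\uast}+\lp{\infty}{v\cd-\vast}\to 0$; the elliptic equation transfers this to $w$. Once $(u,v)$ is close enough to $(\uast,\vast)$, the elementary estimate $u-\uast-\uast\ln(u/\uast)\ge\frac{1}{4\uast}(u-\uast)^2$ (and its analogue for $v$) makes $E(t)$ comparable to its dissipation, so $E(t)\le Ce^{-\ell_0 t}$ for large $t$, and interpolation of $\lp{\infty}{\cdot}$ between $\lp{2}{\cdot}$ and the $C^\theta$-bound preserves exponential decay at a (possibly smaller) rate $\ell>0$. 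The main obstacle I anticipate is the algebraic bookkeeping in the second step: the weight $\kappa$ and two Young-inequality parameters must be coupled just right so that the positive-definiteness of $\widetilde Q$ reduces exactly to \eqref{case1condition1}--\eqref{case1condition3}, with $\delta_1$ playing the role of the free tuning parameter that balances the off-diagonal constraint against the two diagonal ones.
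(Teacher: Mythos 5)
Your overall architecture coincides with the paper's: the same weighted logarithmic functional (the paper takes the weight $\kappa=\frac{a_1\mu_1\delta_1}{a_2\mu_2}$, so that the mixed coefficient becomes $(1+\delta_1)a_1\mu_1$, consistent with what you compute), the same Young absorption of the chemotactic cross terms into the diffusion, the same use of the elliptic equation to dispose of $\into|\nabla w|^2$, the same $L^2$-integrability-plus-H\"older compactness step, the same local comparability $E(t)\simeq\lp{2}{u\cd-\uast}^2+\lp{2}{v\cd-\vast}^2$ near the equilibrium, and an $L^2$-to-$L^\infty$ upgrade at the end. Your final interpolation of $\lp{\infty}{\cdot}$ between $\lp{2}{\cdot}$ and the $C^\theta$ bound is a legitimate, in fact simpler, substitute for the paper's semigroup-based Lemma \ref{L2toLinfty}, and losing a factor in the exponential rate is harmless.

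The genuine gap is in your second step. You propose to test the elliptic equation against $w-\wast$ and apply ``one more Young step'' to get $\into|\nabla w|^2\le C_1\into(u-\uast)^2+C_2\into(v-\vast)^2$, thereby reducing everything to a two-variable quadratic form $\widetilde Q$ in $(u-\uast,v-\vast)$. This throws away the term $-\frac{\gamma}{d_3}\into(w-\wast)^2$ and, more importantly, decouples the two cross terms $\alpha\into(u-\uast)(w-\wast)$ and $\beta\into(v-\vast)(w-\wast)$ from the competition cross term $\into(u-\uast)(v-\vast)$. The paper instead keeps the exact identity
\begin{align*}
  \into |\nabla w|^2
  = \frac{\alpha}{d_3}\into(u-\uast)(w-\wast)+\frac{\beta}{d_3}\into(v-\vast)(w-\wast)-\frac{\gamma}{d_3}\into(w-\wast)^2
\end{align*}
and tests the resulting \emph{three}-variable quadratic form in $(u-\uast,v-\vast,w-\wast)$ for definiteness via Lemma \ref{lem;justcal}. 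This joint treatment is what produces the negative contribution $-\alpha\beta a_1a_2(1+\delta_1)$ inside the bracket of \eqref{case1condition2}--\eqref{case1condition3}; a separate Young bound of the form you describe yields only the positive part $\alpha^2 a_1\delta_1+\beta^2 a_2$ (up to normalization) and hence a strictly stronger smallness requirement on $\mu_1,\mu_2$ than the one in the theorem. Since \eqref{case1condition1} guarantees $\alpha^2a_1\delta_1+\beta^2a_2-\alpha\beta a_1a_2(1+\delta_1)>0$ but this quantity can be much smaller than $\alpha^2a_1\delta_1+\beta^2a_2$, your route proves the conclusion only under hypotheses stronger than \eqref{case1condition2}--\eqref{case1condition3}. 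The fix is exactly the bookkeeping you flagged as the anticipated obstacle: do not eliminate $w-\wast$ early, but carry it as a third variable and check the three leading principal minors of the full quadratic form, for which the existence of an admissible Young parameter $\delta_2$ is precisely equivalent to \eqref{case1condition2}--\eqref{case1condition3}.
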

%
\begin{remark}
If the assumption of Theorem \ref{mainth} and 
\eqref{case1condition1}{\rm --}\eqref{case1condition3} 
are satisfied, then 
Theorem \ref{mainth2} gives 
the convergence rates for solutions 
of \eqref{cp} in the case that $a_1,a_2\in (0,1)$. 
Moreover, the conditions 
\eqref{case1condition1}{\rm --}\eqref{case1condition3} 
are the same conditions as that assumed in \cite{Mizukami_DCDSB} 
in the case that $a_1,a_2\in (0,1)$ and 
$h(u,v,w)=\alpha u + \beta v -\gamma w$. 
\end{remark}

In the case $a_1\geq 1>a_2$ 
asymptotic behavior of solutions to \eqref{cp} 
will be discussed under the following additional 
conditions: 
there exist $\delta_1>0$ and $a'_1\in[1,a_1]$ 
such that 
\begin{align}\label{case2condition1}
  &4\delta_1-a'_1a_2(1+\delta_1)^2>0, 
\\\label{case2condition2}
  &{\mu_2}>\frac{\chi_2^2\delta_1
  (\alpha^2a'_1\delta_1
  +\beta^2a_2-\alpha\beta a'_1a_2(1+\delta_1))}
  {4a_2d_2d_3\gamma(4\delta_1-a'_1a_2(1+\delta_1)^2)}.
\end{align}
The third one gives asymptotic behavior 
in \eqref{cp} 
in the case $a_1\geq 1> a_2$. 
%
\begin{thm}\label{mainth3} 
 Let $d_1,d_2,d_3> 0$, $\mu_1,\mu_2 > 0$, 
 $a_1\ge 1$, $a_2\in (0,1)$, $\chi_1,\chi_2>0$, 
 $\alpha,\beta,\gamma>0$ and 
 let $\Omega \subset \Rn$ $(n\ge 2)$ be a bounded domain 
 with smooth boundary. 
Assume that there exists a unique global 
classical solution $(u,v,w)$ of \eqref{cp} 
such that 
\begin{align*}
 \|u\|_{C^{\theta,\frac{\theta}{2}}
 (\ol{\Omega}\times[t,t+1])} 
 + \|v\|_{C^{\theta,\frac{\theta}{2}}
 (\ol{\Omega}\times[t,t+1])} 
 + \|w\|_{C^{\theta,\frac{\theta}{2}}
 (\ol{\Omega}\times[t,t+1])}
 \le M
 \quad \mbox{for all}\ t>0
\end{align*}
with some $M>0$. 
Then under the conditions 
\eqref{case2condition1}{\rm --}\eqref{case2condition2}, 
$(u,v,w)$ has the following properties\/{\rm :} 
\begin{enumerate}
\item[{\rm (i)}] 
If $a_1>1$ and take $a'_1\in (1,a_1]$ 
in \eqref{case2condition1}--\eqref{case2condition2}, 
then there exist $C>0$ and $\ell > 0$ satisfying 
  \begin{align*}
  \|u(t)\|_{L^{\infty}(\Omega)}
  + \|v(t)-1\|_{L^{\infty}(\Omega)}
  +\left\|w(t)-\frac{\beta}{\gamma}\right\|_{L^{\infty}(\Omega)}
  \leq Ce^{-\ell t}
  \quad \mbox{for all}\ t>0. 
  \end{align*}
\item[{\rm (ii)}]
If $a_1=1$, 
then there exist $C>0$ and $\ell>0$ satisfying 
  \begin{align*}
  \|u(t)\|_{L^{\infty}(\Omega)}
  + \|v(t)-1\|_{L^{\infty}(\Omega)}
  +\left\|w(t)-\frac{\beta}{\gamma}\right\|_{L^{\infty}(\Omega)}
  \leq C(t+1)^{-\ell}
  \quad \mbox{for all}\ t>0. 
  \end{align*}
\end{enumerate}
\end{thm}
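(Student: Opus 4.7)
The target state is $(u^*, v^*, w^*) = (0, 1, \beta/\gamma)$, the competitive-exclusion equilibrium for the pure kinetics \eqref{L-V}, so the plan, paralleling the argument for Theorem \ref{mainth2}, is to construct a Lyapunov functional vanishing at this state and dissipated along \eqref{cp}. Since $u^* = 0$ rules out the entropy $\int(u - u^* - u^* \ln(u/u^*))$, I would take
$$E(t) := \int_\Omega u(\cdot,t)\,dx + A\int_\Omega \bigl(v(\cdot,t) - 1 - \ln v(\cdot,t)\bigr)\,dx$$
for a constant $A > 0$ to be tuned in terms of $\delta_1$ and $a'_1$. The auxiliary parameter $a'_1 \in [1, a_1]$ is exploited via
$$\mu_1 u(1 - u - a_1 v) \le -\mu_1 u^2 - \mu_1 a'_1 u(v-1) - \mu_1(a'_1 - 1) u,$$
which discards the nonpositive remainder $-\mu_1(a_1-a'_1)uv$ and provides flexibility to enlarge the range of admissible $\mu_2$, at the price of losing the linear-in-$u$ dissipation when $a'_1 = 1$.

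Next I would differentiate $E$ along \eqref{cp}, using $\int(1 - 1/v)\nabla\cdot(v\nabla w) = -\int \nabla v \cdot \nabla w/v$ for the chemotaxis in the $v$-equation and expanding the kinetics around $v = 1$. The cross term $A\chi_2 \int \nabla v \cdot \nabla w/v$ is handled by Young's inequality with weight $\delta_1$, producing a gradient term absorbable by $-Ad_2\int |\nabla v|^2/v^2$ and a multiple of $\int|\nabla w|^2$; this latter quantity is converted to bulk expressions by testing $-d_3\Delta w = \alpha u + \beta(v-1) - \gamma(w-w^*)$ against $w - w^*$. Collecting terms, the estimate takes the shape
$$\frac{d}{dt} E(t) + \int_\Omega Q(u, v-1, w-w^*)\,dx + \mu_1(a'_1-1)\int_\Omega u\,dx \le 0,$$
where $Q$ is a quadratic form whose positive definiteness is precisely \eqref{case2condition1}--\eqref{case2condition2}: \eqref{case2condition1} is the discriminant condition for the $(u, v-1)$ principal block after the Young step with parameter $\delta_1$, while \eqref{case2condition2} is the lower bound on $\mu_2$ required for the $(v-1)$-diagonal entry to stay positive once the chemotaxis and elliptic contributions are absorbed.

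In case (i), $a'_1 > 1$, the linear term $\mu_1(a'_1 - 1)\int u$ combines with $\int Q$ to give $\tfrac{d}{dt}E(t) \le -\ell E(t)$, so Gronwall yields exponential decay of $\int u + \int(v-1)^2$; a Gagliardo--Nirenberg interpolation against the uniform H\"older bound then upgrades this to exponential $L^\infty$ decay of $u$ and $v-1$, and elliptic regularity applied to the $w$-equation transfers the decay to $w - \beta/\gamma$. In case (ii), $a_1 = 1$ forces $a'_1 = 1$, the linear-in-$u$ dissipation vanishes, and one is left with $\tfrac{d}{dt}E(t) + c\int_\Omega(u^2 + (v-1)^2) \le 0$; Cauchy--Schwarz $\int u \le |\Omega|^{1/2}(\int u^2)^{1/2}$ together with $v - 1 - \ln v \le C(v-1)^2$ on the compact range of $v$ then gives $\tfrac{d}{dt} E \le -c' E(t)^2$, whence $E(t) \le C/(t+1)$, and the same interpolation bootstrap produces the algebraic $L^\infty$ rate $(t+1)^{-\ell}$. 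The main obstacle is the coefficient bookkeeping for $Q$: choosing $A$ and $\delta_1$ so that positive definiteness of $Q$ corresponds exactly to \eqref{case2condition1}--\eqref{case2condition2}, and verifying that the elliptic substitution for $\int|\nabla w|^2$ does not introduce uncontrollable indefinite cross terms in $w - w^*$.
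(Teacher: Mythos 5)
Your proposal follows essentially the same route as the paper: the same Lyapunov functional $\int_\Omega u + A\int_\Omega(v-1-\ln v)$ with the same use of $a_1'\in[1,a_1]$ to split off the linear dissipation $-\mu_1(a_1'-1)\int_\Omega u$, the same Young/elliptic-substitution/quadratic-form bookkeeping tied to \eqref{case2condition1}--\eqref{case2condition2}, the same dichotomy between the linear ODE $E'\le -\ell E$ when $a_1'>1$ and the quadratic ODE $E'\le -c'E^2$ when $a_1=1$, and an interpolation-type upgrade from $L^2$ to $L^\infty$ rates (the paper's Lemma \ref{L2toLinfty} does this via $L^p$-interpolation, elliptic regularity for $w$, and a semigroup argument, which costs a factor $\tfrac{1}{n+1}$ in the exponent but preserves the exponential/algebraic character). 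The argument is correct as proposed.
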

%
\begin{remark}
If the assumption of Theorem \ref{mainth} and 
\eqref{case2condition1}{\rm --}\eqref{case2condition2} 
are satisfied, then 
Theorem \ref{mainth3} gives 
the convergence rates for 
solutions in the cases that $a_1 > 1 > a_2$ and $a_1 = 1 > a_2$. 
Moreover, the conditions 
\eqref{case2condition1}{\rm --}\eqref{case2condition2} 
are the same conditions as that assumed in \cite{Mizukami_DCDSB} 
in the case that $a_1\ge 1 >a_2$ and 
$h(u,v,w)=\alpha u+\beta v -\gamma w$. 
\end{remark}

\begin{remark} 
Stabilization in the case that $a_1,a_2\ge 1$ 
is a still open question. 
In the case that $a_1,a_2>1$ 
a Lotka--Volterra competition model 
with diffusion term was studied; 
however, its analysis is difficult 
and it is known that 
solutions have complicated structures 
(see cf. \cite{Ei-Yanagida_1994,
Iida-Muramatsu-Ninomiya-Yanagida,
Kan-on_1995,
Kan-on-Yanagida,
Kishimoto-Weinberger_1985,
Matano-Mimura,
Mimura-Ei-Fang_1991}). 
\end{remark}

The strategy of the proof of Theorem \ref{mainth} 
is to extend a method in \cite{Tello-Winkler_2007} 
to a two-species case. 
We first aim to establish the $L^p$-estimate 
for $u$ with some $p>\frac n2$ 
from the following derivative of $\into u^p$: 
\begin{align}\label{derofu^p}
 \frac{1}{p}\frac{d}{dt}\into u^p
 \le (p-1)\chi_1\into u^{p-1}\nabla u\cdot \nabla v
 + \mu_1 \into u^p(1-u-a_1v). 
\end{align}
Since the third equation in \eqref{cp} derives that 
\begin{align}\label{lapwchange} 
(p-1)\chi_1 \into u^{p-1} \nabla u\cdot \nabla v 
  &= \frac{(p-1)\chi_1}{d_3p} 
  \into u^p (\alpha u+\beta v - \gamma w),
\end{align}
we shall show that a combination of 
\eqref{derofu^p} and \eqref{lapwchange}, 
along with the condition \eqref{condition;bounded} 
implies 
\begin{align*}
  \frac{d}{dt}\into u^p 
  \le -c_1 \left(\into u^p\right)^{\frac{p+1}{p}} 
  +c_2\into u^p,
\end{align*}
which leads to $L^p$-estimate for $u$. 
Then aided by standard semigroup estimates, 
we can obtain the $L^\infty$-estimate 
for $u$. 
On the other hand, one of the keys for 
the proof of Theorems \ref{mainth2} and \ref{mainth3} 
is to derive the following energy estimate: 
\begin{align}\label{strategy;stab}
 \frac{d}{dt}E(t) \le 
 -\ep \into 
 \left[
   (u\cd-\ubar)^2 + (v\cd-\vbar)^2 + (w\cd-\wbar)^2
 \right]
\end{align}
for all $t>0$ with some positive function $E$ and 
some constant $\ep>0$, 
where $(\ubar,\vbar,\wbar)\in \mathbb{R}^3$ is 
a solution of \eqref{cp}. 
Thanks to \eqref{strategy;stab}, we can obtain that 
there exists $C>0$ such that 
\begin{align*}
 \int_0^\infty\into (u-\ubar)^2 
 + \int_0^\infty\into (v-\vbar)^2
 + \int_0^\infty\into (w-\wbar)^2
 \le C, 
\end{align*}
which together with the regularity of the solution 
leads to Theorems \ref{mainth2} and \ref{mainth3}. 

%
%

This paper is organized as follows. 
In Section 2 
we prove 
global existence 
and boundedness (Theorem \ref{mainth}) 
through a series of lemmas. 
Section 3 is devoted to the proof of 
asymptotic stability 
(Theorems \ref{mainth2} and \ref{mainth3}); 
we first provide some lemmas which will be used later, 
and we next devide the section into Sections 3.1 and 3.2 
according to the proof of Theorem \ref{mainth2} and 
that of Theorem \ref{mainth3}, respectivly. 

%
%

\section{Global existence and boundedness}
In this section 
we shall show global existence and 
boundedness in \eqref{cp}. 
First we will recall the known result 
about local existence of solutions to \eqref{cp} 
(\cite[Lemma 2.1]{Black-Lankeit-Mizukami_01}, 
\cite[Lemma 2.1]{stinner_tello_winkler}).

%
%

\begin{lem}\label{localexistence}
 Let $d_1,d_2,d_3> 0$, $\mu_1,\mu_2 > 0$, 
 $a_1,a_2> 0$, $\chi_1,\chi_2>0$, 
 $\alpha,\beta,\gamma>0$ and 
 let $\Omega \subset \Rn$ $(n\in \mathbb{N})$ be a bounded domain 
 with smooth boundary. 
 Then for any $u_0, v_0$ satisfying \eqref{ini} 
 for some $q>n$, 
 there exist $\tmax \in$ $(0,\infty]$ and an 
 exactly one pair $(u,v,w)$ of nonnegative functions 
 \begin{align*}
   &u,\; v,\; w\in C(\ol{\Omega}\times [0,\tmax))\cap 
   C^{2,1}(\ol{\Omega}\times (0,\tmax))
 \end{align*}
 which satisfy \eqref{cp}. 
 Moreover, 
 \begin{align*}
   either\ \tmax=\infty \quad 
   or \quad \lim_{t\to \tmax}
    (\lp{\infty}{u\cd}+
    \lp{\infty}{v\cd})=\infty.
\end{align*}
\end{lem}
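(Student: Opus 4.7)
The plan is to reduce \eqref{cp} to a pair of quasilinear parabolic equations for $(u,v)$ by eliminating $w$ through the elliptic equation, and then to apply Banach's fixed-point theorem. This is the standard path taken in \cite{Black-Lankeit-Mizukami_01,stinner_tello_winkler}, and I would essentially follow that template.

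First, for any given nonnegative $(\tilde u,\tilde v)\in C(\ol{\Omega}\times[0,T])^2$, the elliptic problem
\begin{equation*}
-d_3\Delta w+\gamma w=\alpha\tilde u+\beta\tilde v \ \ \text{in}\ \Omega,\qquad \nabla w\cdot\nu=0\ \ \text{on}\ \pa\Omega,
\end{equation*}
admits a unique solution $w(\cdot,t)\in W^{2,q}(\Omega)$ for each $t\in[0,T]$, and elliptic $L^p$-regularity gives a bound of the form $\wmp{2,q}{w\cd}\le C(\lp{q}{\tilde u\cd}+\lp{q}{\tilde v\cd})$. In particular, since $q>n$, the Sobolev embedding implies that $\nabla w\cd\in L^\infty(\Omega)$ with a comparable bound. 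Substituting this $w$ into the first two equations of \eqref{cp} turns them into a parabolic system for $(u,v)$ with drift $-\chi_i\nabla w$ and logistic source, linear in $(u,v)$ once $(\tilde u,\tilde v)$ is fixed.

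Next, I would define the solution map $\Phi:(\tilde u,\tilde v)\mapsto(u,v)$ on the closed ball
\[
B_{R,T}=\{(\tilde u,\tilde v)\in C(\ol{\Omega}\times[0,T])^2\mid \tilde u,\tilde v\ge 0,\ \|\tilde u\|_\infty+\|\tilde v\|_\infty\le R\}
\]
with $R$ slightly larger than $\|u_0\|_{L^\infty}+\|v_0\|_{L^\infty}$. Using the variation-of-constants formula for the Neumann heat semigroup together with the standard $L^p$--$L^\infty$ smoothing estimate, and invoking the uniform bound on $\nabla w$ just obtained, one verifies that $\Phi(B_{R,T})\subset B_{R,T}$ and that $\Phi$ is a contraction provided $T$ is small enough; this is where the assumption $q>n$ is essential, since it allows control of the drift term $u\nabla w$ in a sup-norm setting. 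Nonnegativity of $u$ and $v$ is preserved by a standard application of the parabolic maximum principle. The unique fixed point, together with the associated $w$, yields a classical solution on $[0,T]$ after a Schauder bootstrap.

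Finally, the extensibility dichotomy is obtained in the usual way. Set $\tmax:=\sup\{T>0\mid\text{a solution exists on }[0,T]\}$. If $\tmax<\infty$ and $\limsup_{t\nearrow\tmax}(\lp{\infty}{u\cd}+\lp{\infty}{v\cd})<\infty$, then elliptic regularity bounds $\nabla w$ uniformly in $L^\infty(\Omega\times[0,\tmax))$, and parabolic Schauder theory applied to the $u$- and $v$-equations yields uniform H\"older bounds up to $\tmax$. One may then restart the fixed-point construction from a time close to $\tmax$ and extend the solution past $\tmax$, contradicting maximality. The main, though modest, obstacle is to balance the function-space setup so that the three ingredients fit together coherently: the regularity of the initial data in $C(\ol\Omega)$ with $q>n$, the smoothing properties of the Neumann heat semigroup, and the two-derivative elliptic gain on $w$. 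Once this accounting is carried out, everything is routine and exactly parallels the cited references.
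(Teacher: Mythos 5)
Your proposal is correct and takes essentially the same approach as the paper, which does not prove this lemma itself but recalls it from \cite[Lemma 2.1]{Black-Lankeit-Mizukami_01} and \cite[Lemma 2.1]{stinner_tello_winkler}. The scheme you outline --- eliminating $w$ via the elliptic equation, a Banach fixed-point argument in $C(\ol{\Omega}\times[0,T])$ where $q>n$ controls the drift term through the semigroup smoothing estimates, the maximum principle for nonnegativity, a Schauder bootstrap for classical regularity, and the standard extensibility dichotomy --- is precisely the argument carried out in those references.
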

%
%
%
%
%
%
%
%
%
%
%
We next give the $L^p$-estimate for $u$ with some $p>\frac n2$ 
which plays an important role in deriving 
$L^\infty$-estimate for $u$. 
The proof is based on the proof of 
\cite[Lemma 2.2]{Tello-Winkler_2007}. 
\begin{lem}\label{lem;Lp;u}
Assume that 
\eqref{condition;bounded}{\rm --}\eqref{ini} 
are satisfied. 
Then for all 
$p\in I_1$, 
there exists $C(p)>0$ such that 
\begin{align*}
 \lp{p}{u\cd} \le C(p)
\end{align*}
for all $t>0$, where 
\[
I_1 := \left(
\frac n2, 
\min
\left\{
\frac{\alpha\chi_1}{(\alpha\chi_1-d_3\mu_1)_+}, 
\frac{\beta \chi_1}{(\beta \chi_1 - a_1 d_3 \mu_1)_+}
\right\}
\right).
\]
\end{lem}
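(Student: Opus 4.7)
The plan is to test the first equation of \eqref{cp} against $u^{p-1}$ and then use the elliptic equation for $w$ to convert the chemotactic contribution into a polynomial expression in $u,v,w$, as outlined in the introduction. Concretely, multiplying the $u$-equation by $u^{p-1}$, integrating by parts and dropping the nonpositive diffusion term $-(p-1)d_1 \int_{\Omega} u^{p-2}|\nabla u|^2$ yields \eqref{derofu^p}. Then I would rewrite the chemotactic term as
\[
(p-1)\chi_1 \into u^{p-1}\nabla u\cdot \nabla w
=-\frac{(p-1)\chi_1}{p}\into u^p \Delta w
=\frac{(p-1)\chi_1}{p d_3}\into u^p(\alpha u+\beta v-\gamma w),
\]
using the third equation of \eqref{cp}, which is \eqref{lapwchange} (modulo the apparent $\nabla v$/$\nabla w$ typo in the introduction).

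Substituting into \eqref{derofu^p} and collecting like powers of $u$ gives
\[
\frac{1}{p}\frac{d}{dt}\into u^p
\le \Bigl(\tfrac{(p-1)\alpha\chi_1}{p d_3}-\mu_1\Bigr)\into u^{p+1}
+\Bigl(\tfrac{(p-1)\beta\chi_1}{p d_3}-a_1\mu_1\Bigr)\into u^p v
-\tfrac{(p-1)\gamma\chi_1}{p d_3}\into u^p w
+\mu_1\into u^p.
\]
Because $v,w\ge 0$, the two middle terms can be discarded provided their prefactors are $\le 0$. The inequality $\frac{(p-1)\alpha\chi_1}{p d_3}\le \mu_1$ is equivalent to $p(\alpha\chi_1 - d_3\mu_1)\le \alpha\chi_1$, which is automatic if $\alpha\chi_1\le d_3\mu_1$ and otherwise reads $p\le \frac{\alpha\chi_1}{\alpha\chi_1 - d_3\mu_1}$; similarly for the $u^pv$ coefficient with $\beta$ and $a_1$. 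These are precisely the two bounds appearing in the definition of $I_1$, and together with $p>n/2$ (which will be needed for the subsequent $L^\infty$-bootstrap) this selects the admissible range. Moreover, under \eqref{condition;bounded} a short computation shows that $n/2$ lies strictly below both upper bounds, so $I_1$ is indeed nonempty. For $p\in I_1$ we also gain a strictly negative coefficient $-c_p:=\frac{(p-1)\alpha\chi_1}{p d_3}-\mu_1<0$ in front of $\into u^{p+1}$.

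Setting $y(t):=\into u^p(\cdot,t)$ and invoking Hölder's inequality in the form $\into u^{p+1}\ge |\Omega|^{-1/p}y^{(p+1)/p}$, the estimate reduces to the scalar ODI
\[
y'(t)\le p\mu_1\, y(t)- p c_p |\Omega|^{-1/p} y(t)^{(p+1)/p},
\]
whose superlinear dissipation forces $y(t)\le \max\{y(0),\,C(p)\}$ for an explicit constant $C(p)$ depending on $p$, $\mu_1$, $c_p$ and $|\Omega|$. Combined with the bound on $\int_\Omega u_0^p$ from \eqref{ini} (since $u_0\in C(\ol{\Omega})$), this yields the claimed uniform-in-time $L^p$-estimate.

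The main obstacle is essentially bookkeeping: one must carefully track the sign of $\alpha\chi_1-d_3\mu_1$ (and its $\beta$-analogue) so that the definition of $I_1$ with the $(\,\cdot\,)_+$ truncation is correctly interpreted, and must check that \eqref{condition;bounded} translates into the strict inequality $p>n/2$ being compatible with the two upper bounds. Once this is in place, the argument is the direct two-species adaptation of Tello--Winkler's strategy in \cite{Tello-Winkler_2007} and involves no further technical difficulties.
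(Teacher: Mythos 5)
Your argument is correct and follows essentially the same route as the paper: test the first equation with $u^{p-1}$, convert the chemotactic term via the elliptic equation for $w$, use $p\in I_1$ (together with \eqref{condition;bounded} for nonemptiness of $I_1$) to give the coefficients of $\int_\Omega u^{p+1}$ and $\int_\Omega u^p v$ the right signs, and close with H\"older's inequality and an ODE comparison. The only cosmetic difference is that you make the discarding of the $-\gamma w$ contribution explicit, and your $\max\{y(0),C(p)\}$ is the correct reading of the paper's final bound (where ``$\min$'' is evidently a typo).
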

\begin{proof}
We fix $p\in I_1$. 
Here we note from 
the condition \eqref{condition;bounded} that 
$
I_1
\neq \emptyset$.
Multiplying the first equation in \eqref{cp} by 
$u^{p-1}$ and integrating it over $\Omega$, 
we obtain that 
\begin{align}\notag
  &\frac 1p\frac{d}{dt} \into u^p 
  + d_1 (p-1)\into u^{p-2}|\nabla u|^2 
\\\label{eq;up}
  &= (p-1)\chi_1 \into u^{p-1} \nabla u\cdot \nabla v 
  + \mu_1 \into u^p (1-u-a_1 v). 
\end{align}
Then integration by parts and 
the third equation in \eqref{cp} imply that 
\begin{align}\notag
  (p-1)\chi_1 \into u^{p-1} \nabla u\cdot \nabla v 
  &= -\frac{(p-1)\chi_1}{p}\into u^p \Delta v
  \\\label{equ;nablaunablav}
  &= \frac{(p-1)\chi_1}{d_3p} 
  \into u^p (\alpha u+\beta v - \gamma w). 
\end{align}
Therefore a combination of \eqref{eq;up} 
with \eqref{equ;nablaunablav} yields that 
\begin{align*}
  \frac 1p \frac{d}{dt}\into u^p 
  \le 
    \mu_1 \into u^p
    -\left(\mu_1-\frac{\alpha(p-1)\chi_1}{d_3 p}\right)
    \into u^{p+1} 
    -\left(a_1\mu_1-\frac{\beta(p-1)\chi_1}{d_3 p}\right)
    \into u^pv. 
\end{align*}
Recalling %
$p\in I_1=
\left(\frac n2, 
\min\left\{
\frac{\alpha\chi_1}{(\alpha\chi_1-d_3\mu_1)_+}, 
\frac{\beta \chi_1}{(\beta \chi_1 - a_1 d_3 \mu_1)_+}
\right\}\right)$ that 
\begin{align*}
  \mu_1-\frac{\alpha(p-1)\chi_1}{d_3 p}>0 
  \quad \mbox{and}\quad 
  a_1\mu_1-\frac{\beta(p-1)\chi_1}{d_3 p}>0, 
\end{align*}
we establish from the H\"older inequality 
\[
  \into u^p 
   \le |\Omega|^{\frac{p}{p+1}}
   \left(\into u^{p+1}\right)^{\frac{p}{p+1}}
\]
that there exists $\ep>0$ satisfying 
\begin{align*}
    \frac 1p \frac{d}{dt}\into u^p 
  \le -\ep\left(\into u^p\right)^{\frac{p+1}{p}} + \mu_1 \into u^p, 
\end{align*} 
which implies that 
\begin{align*} 
  \lp{p}{u\cd}\le 
  \min\left\{\lp{p}{u_0},\frac{\mu_1}{\ep}\right\} 
  \quad \mbox{for all}\ t\in (0,\tmax). 
\end{align*} 
Thus we can attain the conclusion of this lemma. 
\end{proof}

%
%
%
%
Similarly, we can confirm the 
following $L^p$-estimate for $v$ 
with some $p>\frac n2$. 
\begin{lem}\label{lem;Lp;v}
Assume that 
\eqref{condition;bounded}{\rm --}\eqref{ini} 
are satisfied. 
Then for all 
$p\in I_2$, 
there exists $C(p)>0$ such that 
\begin{align*}
 \lp{p}{v\cd} \le C(p)
 \quad\mbox{for all}\ t>0, 
\end{align*}
where 
$I_2 := \left(
\frac n2, 
\min
\left\{
\frac{\beta\chi_2}{(\beta\chi_2-d_3\mu_2)_+}, 
\frac{\alpha \chi_2}{(\alpha\chi_2 - a_2 d_3 \mu_2)_+}
\right\}
\right)$. 
\end{lem}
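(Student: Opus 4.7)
The plan is to mirror the proof of Lemma \ref{lem;Lp;u}, interchanging the roles of $u$ and $v$ (and correspondingly $\chi_1\leftrightarrow\chi_2$, $\mu_1\leftrightarrow\mu_2$, $a_1\leftrightarrow a_2$) while keeping in mind that the chemotaxis contributions come from the same third equation, so the roles of $\alpha$ and $\beta$ get swapped when we test against $v^{p-1}$ instead of $u^{p-1}$. Concretely, fix $p\in I_2$; the condition \eqref{condition;bounded} (applied now to the second inequality) guarantees $I_2\neq\emptyset$, so this choice is admissible.

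First I would multiply the second equation in \eqref{cp} by $v^{p-1}$ and integrate over $\Omega$, discarding the nonnegative diffusion contribution $d_2(p-1)\int_\Omega v^{p-2}|\nabla v|^2$, to obtain a differential inequality of the form
\begin{align*}
\frac{1}{p}\frac{d}{dt}\into v^p
\le (p-1)\chi_2\into v^{p-1}\nabla v\cdot\nabla w + \mu_2\into v^p(1-a_2u-v).
\end{align*}
Next I would substitute the third equation after an integration by parts, exactly as in \eqref{equ;nablaunablav}, yielding
\begin{align*}
(p-1)\chi_2\into v^{p-1}\nabla v\cdot\nabla w
= \frac{(p-1)\chi_2}{d_3 p}\into v^p(\alpha u+\beta v-\gamma w),
\end{align*}
and rearranging terms I arrive at
\begin{align*}
\frac{1}{p}\frac{d}{dt}\into v^p
\le \mu_2\into v^p
-\left(\mu_2-\frac{\beta(p-1)\chi_2}{d_3 p}\right)\into v^{p+1}
-\left(a_2\mu_2-\frac{\alpha(p-1)\chi_2}{d_3 p}\right)\into v^p u.
\end{align*}

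The choice $p\in I_2$ is precisely designed so that both coefficients in parentheses are strictly positive; discarding the $\int v^p u$ term (which is nonnegative) and applying the H\"older inequality $\int_\Omega v^p\le|\Omega|^{p/(p+1)}(\int_\Omega v^{p+1})^{p/(p+1)}$ to absorb $-\int v^{p+1}$ into a superlinear term in $\int v^p$, I obtain
\begin{align*}
\frac{1}{p}\frac{d}{dt}\into v^p
\le -\ep\left(\into v^p\right)^{\frac{p+1}{p}}+\mu_2\into v^p
\end{align*}
for some $\ep>0$. A standard ODE comparison then yields a uniform-in-time bound on $\lp{p}{v\cd}$, which, in view of the blow-up criterion in Lemma \ref{localexistence}, holds on the full maximal interval of existence.

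No step is really an obstacle, since the argument is a verbatim transcription of Lemma \ref{lem;Lp;u}; the only point that warrants a moment's care is the bookkeeping of which constants sit in front of $\int v^{p+1}$ and $\int v^p u$ after using the third equation, because the swap $u\leftrightarrow v$ induces a simultaneous swap $\alpha\leftrightarrow\beta$ in the two Lotka--Volterra-like terms, which is exactly what makes the threshold set $I_2$ differ from $I_1$ by the stated permutation.
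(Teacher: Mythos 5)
Your proposal is correct and is exactly what the paper intends: its proof of this lemma is literally ``a similar argument as in the proof of Lemma \ref{lem;Lp;u},'' and your transcription (testing the second equation with $v^{p-1}$, substituting the elliptic equation for $\Delta w$, checking that $p\in I_2$ makes the coefficients of $\into v^{p+1}$ and $\into v^p u$ positive, then H\"older plus ODE comparison) carries out that symmetry faithfully, including the correct $\alpha\leftrightarrow\beta$ swap that distinguishes $I_2$ from $I_1$.
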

\begin{proof}
A similar argument as in the proof of 
Lemma \ref{lem;Lp;v} derives this lemma. 
\end{proof}

%
%
%
%
Now we could construct all estimates which 
will enable us to obtain the estimate 
for the solution; Lemmas \ref{lem;Lp;u} 
and \ref{lem;Lp;v} lead to the following lemma. 
The proof is based on a known argument 
involving semigroup estimates which 
derive the $L^\infty$-estimate for $u$ 
from $L^p$-estimate 
with $p>\frac n2$ (see e.g., \cite{B-B-T-W}). 
\begin{lem}\label{lem;Linfty;uvw}
Assume that 
\eqref{condition;bounded}{\rm --}\eqref{ini} 
are satisfied. 
Then there exists $C > 0$ such that 
\begin{align}\label{Linfty;uvandw}
 \lp{\infty}{u\cd} 
 +\lp{\infty}{v\cd}
 +\|w\cd\|_{W^{1,q}(\Omega)}
 \le C 
 \quad \mbox{for all}\ t>0. 
\end{align}
Moreover, there exist $M > 0$ and $\theta\in (0,1)$ 
such that 
\begin{align*}
   \|u\|_{C^{\theta,\frac{\theta}{2}}
   (\ol{\Omega}\times[t,t+1])}
   +\|v\|_{C^{\theta,\frac{\theta}{2}}
   (\ol{\Omega}\times[t,t+1])}
   +\|w\|_{C^{\theta,\frac{\theta}{2}}
   (\ol{\Omega}\times[t,t+1])}\leq M
   \quad \mbox{for all}\ t\geq 1.
\end{align*}
\end{lem}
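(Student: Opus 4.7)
The plan is to treat this as a standard bootstrap from the $L^p$-bounds of Lemmas \ref{lem;Lp;u} and \ref{lem;Lp;v} to $L^\infty$-bounds, and then to Hölder bounds, using elliptic regularity on the $w$-equation and Neumann heat semigroup smoothing on the $u$- and $v$-equations. I would set things up as follows.

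First I would fix some $p_0 \in I_1 \cap I_2$ with $p_0 > n/2$, which is nonempty by \eqref{condition;bounded}, and invoke Lemmas \ref{lem;Lp;u} and \ref{lem;Lp;v} to conclude $\|u(\cdot,t)\|_{L^{p_0}(\Omega)} + \|v(\cdot,t)\|_{L^{p_0}(\Omega)} \le C$ uniformly in $t>0$. Since the third equation in \eqref{cp} is elliptic with Neumann data, standard elliptic $W^{2,p}$-regularity applied to $-d_3\Delta w + \gamma w = \alpha u + \beta v$ yields $\|w(\cdot,t)\|_{W^{2,p_0}(\Omega)} \le C$ uniformly in $t$; by Sobolev embedding this upgrades $\nabla w$ to $L^{q_1}(\Omega)$ with some exponent $q_1>p_0$ (namely $q_1 = np_0/(n-p_0)$ when $p_0<n$, or any finite $q_1$ otherwise). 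This is the gain that drives the bootstrap.

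Next, for any fixed $t_0>0$, I would represent $u$ via the variation-of-constants formula associated with the Neumann heat semigroup $(e^{\sigma d_1\Delta})_{\sigma\ge 0}$:
\begin{equation*}
u(\cdot,t) = e^{(t-t_0)d_1\Delta}u(\cdot,t_0) - \chi_1 \int_{t_0}^t e^{(t-s)d_1\Delta}\nabla\cdot\bigl(u(\cdot,s)\nabla w(\cdot,s)\bigr)\,ds + \mu_1 \int_{t_0}^t e^{(t-s)d_1\Delta}u(1-u-a_1 v)\,ds.
\end{equation*}
Applying the standard $L^p$–$L^q$ smoothing estimates for the Neumann heat semigroup (see e.g.\ the framework in \cite{B-B-T-W}), using Hölder's inequality to place $u\nabla w \in L^{r}(\Omega)$ for a suitable $r$ determined by the current $L^p$-bound on $u$ and the $L^{q_1}$-bound on $\nabla w$, one obtains an improved $L^{p_1}$-bound on $u$ with $p_1>p_0$. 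Iterating this Moser-type step finitely many times—each step yielding a higher integrability exponent for $w$ via the elliptic equation and then a higher integrability exponent for $u,v$ via the semigroup estimate—terminates in a uniform $L^\infty$-bound for $u$ and $v$. Elliptic regularity then upgrades $\|w(\cdot,t)\|_{W^{1,q}(\Omega)}$ with $q$ as large as we wish, proving \eqref{Linfty;uvandw}.

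For the Hölder estimate, once $u,v,w$ and $\nabla w$ are uniformly bounded, the equations for $u$ and $v$ become linear parabolic problems with bounded coefficients and bounded drift, so standard parabolic regularity (e.g.\ the Hölder estimates of Porzio–Vespri / Ladyzenskaya–Solonnikov–Ural'ceva applied on the cylinders $\overline\Omega\times[t,t+1]$ for $t\ge 1$) yields $\theta\in(0,1)$ and the uniform Hölder bound on $u$ and $v$; applied to the elliptic equation with Hölder-continuous right-hand side, $w$ inherits a Hölder bound as well.

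The main obstacle I expect is the bootstrap step, namely keeping track of the admissible exponents so that at each stage the drift term $u\nabla w$ lies in an $L^r$-space for which the semigroup estimate actually produces an \emph{increase} of integrability (this requires $r>n/2$ at the final stage, or equivalently controlling $\nabla w$ in $L^q$ with $q>n$), and verifying that the number of iterations needed is finite. This is handled by the fact that the gain $p \mapsto np/(n-p)$ from elliptic regularity plus Sobolev embedding is strictly super-linear below the critical exponent and becomes arbitrary large above it, so finitely many iterations suffice starting from any $p_0>n/2$.
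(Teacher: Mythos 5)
Your overall architecture matches the paper's: fix $p\in I_1\cap I_2$ with $\frac n2<p<n$, get uniform $L^p$ bounds from Lemmas \ref{lem;Lp;u} and \ref{lem;Lp;v}, feed them into elliptic $W^{2,p}$ regularity for the third equation and Sobolev embedding to control $\nabla w$ in $L^{\frac{np}{n-p}}$, then use the variation-of-constants representation of $u$ together with Neumann heat semigroup smoothing, and finally cite parabolic regularity for the H\"older bound. Where you diverge is in how the cross-diffusion term is closed. You propose a finite Moser-type iteration $p_0\to p_1\to\cdots\to\infty$, re-running elliptic regularity at each stage; the paper instead closes the estimate in a single step by setting $A(T):=\sup_{t<T}\lp{\infty}{u\cd}$ and interpolating $\lp{r\vartheta}{u\cd}\le \lp{\infty}{u\cd}^{1-\frac{p}{r\vartheta}}\lp{p}{u\cd}^{\frac{p}{r\vartheta}}$ with $r\in(n,q)$, which yields $A(T)\le C_8+C_9A(T)^{1-\frac{p}{r\vartheta}}$ and hence a uniform bound by absorption. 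Both mechanisms are standard and both work; the absorption argument is shorter and avoids tracking an exponent ladder, while your iteration avoids the sublinear self-reference. One small correction: the integrability needed at the final stage is $u\nabla w\in L^{r}$ with $r>n$ (so that $\int_0^1\sigma^{-\frac12-\frac{n}{2r}}\,d\sigma<\infty$), not $r>\frac n2$.

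The one step you should not gloss over is the logistic term in the Duhamel formula. In your iteration, $\mu_1\int_{t_0}^t e^{(t-s)d_1\Delta}u(1-u-a_1v)\,ds$ contains $u^2$, which at stage $k$ lies only in $L^{p_k/2}$, so treating it by the same $L^p$--$L^q$ smoothing would destroy the gain of integrability. The paper sidesteps this by using the pointwise one-sided bound $\mu_1u(1-u-a_1v)\le\frac{(1+\mu_1)^2}{4\mu_1}$ together with the order-preserving property of the semigroup, so that this term contributes only an upper bound $u_3\cd\le C_7$, the matching lower bound coming from the nonnegativity of $u$. You need to insert this (or an equivalent) device for your bootstrap to close; with it, your argument is sound.
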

\begin{proof}
We fix 
$p\in I_1\cap I_2\cap (0,n)$, where $I_1$ and $I_2$ 
are the intervals defined in 
Lemmas \ref{lem;Lp;u} and \ref{lem;Lp;v}. 
Then thanks to Lemmas \ref{lem;Lp;u} and \ref{lem;Lp;v}, 
we can find $C_1>0$ such that 
\begin{align}\label{Lp;uandv}
  \lp{p}{u\cd}+\lp{p}{v\cd} \le C_1
 \quad \mbox{for all}\ t\in (0,\tmax). 
\end{align}
We first verify the $L^{\frac{np}{n-p}}$-estimate 
for $\nabla v$. 
Here for all $q\in (1,\infty)$, 
the standard elliptic regularity argument 
(see e.g., \cite[Theorem 19.1]{Friedman_1969}) 
leads to the existence of a constant $C_E(q)>0$ satisfying 
\begin{align}\label{regularity;w}
\wmp{2,q}{w\cd}\le C_E(q) (\lp{q}{u\cd}+\lp{q}{v\cd})
\quad\mbox{for all}\ t\in (0,\tmax). 
\end{align} 
Therefore a combination of 
\eqref{regularity;w} with \eqref{Lp;uandv} yields 
from the Sobolev embedding theorem that 
there exists $C_2>0$ such that 
\begin{align*}
 \lp{\frac{np}{n-p}}{\nabla v\cd}
 \le C_2
 \quad \mbox{for all}\ t\in (0,\tmax) 
\end{align*}
since $p<n$. 
We next establish the $L^\infty$-estimate for $u$. 
Since $p>\frac n2$, we can take $r\in (n,q)$ such that 
\begin{align*}
  p > \frac{nr}{n+r}. 
\end{align*}
We take $\vartheta >1$ satisfying 
\begin{align*}
  \frac{1}{\vartheta} 
  < \min\left\{
  1-\frac{r(n-p)}{np}, 
  \frac{q-r}{q}
  \right\}.
\end{align*}
Then $\vartheta':= \frac{\vartheta}{\vartheta-1}$ 
satisfies 
\begin{align*}
r\vartheta' < \frac{np}{n-p}. 
\end{align*}
Now for all $T\in (0,\tmax)$ 
we note that 
\begin{align*}
 A(T):= \sup_{t\in (0,t)}\lp{\infty}{u\cd}
\end{align*}
is finite. 
To obtain the estimate for $A(T)$ 
we put $t_0:=(t-1)_+$ and 
represent $u$ according to 
\begin{align}\notag
  u\cd 
  &= e^{(t-t_0)d_1\Delta}u(t_0) 
  -\chi_1\int_{t_0}^t e^{(t-s)d_1\Delta}\nabla \cdot 
  (u(\cdot,s)\nabla v(\cdot,s))\,ds 
\\\notag
  &\quad\, +\mu_1\int_{t_0}^t e^{(t-s)d_1\Delta}
  u(\cdot,s)(1-u(\cdot,s)-a_1v(\cdot,s))\,ds
\\\label{rep;u} 
  &=: u_1\cd + u_2\cd + u_3\cd 
\end{align}
for $t\in (0,\tmax)$. 
In the case that $t\le 1$, i.e., $t_0=0$, 
from the order preserving property of the 
Neumann heat semigroup we see that
\begin{align}\label{u1;t<1}
\lp{\infty}{u_1\cd}\le \lp{\infty}{u_0} 
\quad \mbox{for all}\ t\in 
(0,1]\cap (0,T). 
\end{align}
In the case that $t>1$ using the 
$L^p$-$L^q$ estimate for 
$(e^{\tau \Delta})_{\tau \ge 0}$ 
(see \cite[Lemma 1.3]{win_aggregationvs}) 
yields that there is $C_3>0$ such that 
\begin{align}\label{u1;t>1}
  \lp{\infty}{u_1\cd} 
  \le C_3\lp{p}{u(\cdot,t_0)}\le C_1C_3
  \quad \mbox{for all}\ t\in (1,T). 
\end{align}
Next due to a known smoothing property of 
$(e^{\tau \Delta})_{\tau \ge 0}$ (see \cite[Lemma 3.3]{FIWY_2016}), 
we can find $C_4>0$ such that 
\begin{align*} 
  \lp{\infty}{u_2\cd} 
  \le C_4  
  \sup_{t\in (0,T)}\lp{r}{u\cd\nabla v\cd}
  \int_{0}^1 \sigma^{-\frac 12-\frac n{2r}}\,d\sigma. 
\end{align*}
Noting from $r\vartheta'<\frac{np}{n-p}$ 
and \eqref{Lp;uandv} that 
\begin{align*}
  \lp{r}{u\cd\nabla v\cd}
  &\le \lp{r\vartheta}{u\cd}\lp{r\vartheta'}{\nabla v\cd}
\\
  &\le C_5\lp{\infty}{u\cd}^{1-\frac{p}{r\vartheta}}
  \lp{p}{u\cd}^{\frac{p}{r\vartheta}}\lp{\frac{np}{n-p}}{\nabla v\cd}
\\
  &\le C_1^{\frac{p}{r\vartheta}}C_2C_5 
       A(T)^{1-\frac{p}{r\vartheta}}
  \quad\mbox{for all}\ t\in (0,T) 
\end{align*}
with some $C_5>0$, 
we establish that there exists $C_6>0$ such that 
\begin{align}\label{u2}
  \lp{\infty}{u_2\cd}\le C_6 
  \quad \mbox{for all}\ t\in (0,T). 
\end{align}
Finally, the maximum principle together with the 
elementary inequality 
\[
  \mu_1 u(1-u-a_1v) 
  \le -\mu_1\left(u-\frac{1+\mu_1}{2\mu_1}\right)^2
  +\frac{(1+\mu_1)^2}{4\mu_1}
  \le \frac{(1+\mu_1)^2}{4\mu_1}
\] 
implies that there exists $C_7>0$ such that 
\begin{align}\label{u3}
u_3\cd \le C_7 
\quad \mbox{for all}\ t\in (0,T).
\end{align} 
Therefore a combination of \eqref{rep;u}, 
the nonnegativity of $u$ with 
\eqref{u1;t<1}, \eqref{u1;t>1}, \eqref{u2}, \eqref{u3} 
tells us that there exist $C_8,C_9>0$ such that 
\begin{align*}
  A(T) \le C_8 + C_9A(T)^{1-\frac{p}{r\vartheta}},
\end{align*}
which implies from $p<r\vartheta$ that 
\begin{align*}
A(T) \le C_{10} 
\quad \mbox{for all}\ T\in (0,\tmax)
\end{align*}
with some $C_{10}>0$. 
Thus we obtain the $L^\infty$-estimate for $u$. 
Similarly, we can verify the $L^\infty$-estimate for $v$. 
Then invoking \eqref{regularity;w}, 
we see that there exists $C_{11}>0$ such that 
\begin{align*}
\wmp{1,q}{w\cd}\le C_{11}
\quad \mbox{for all}\ t\in (0,\tmax), 
\end{align*}
which implies \eqref{Linfty;uvandw}. 
Moreover, known regularity arguments (see \cite[Proposition 2.3]{Cao-Wang-Yu_2016}) 
enable us to find $C_{12}>0$ and $\theta\in (0,1)$ 
satisfying 
\begin{align*}
   \|u\|_{C^{\theta,\frac{\theta}{2}}
   (\ol{\Omega}\times[t,t+1])}
   +\|v\|_{C^{\theta,\frac{\theta}{2}}
   (\ol{\Omega}\times[t,t+1])}
   +\|w\|_{C^{\theta,\frac{\theta}{2}}
   (\ol{\Omega}\times[t,t+1])}\leq C_{12}
   \quad \mbox{for all}\ t\geq 1, 
\end{align*}
which implies the end of the proof. 
\end{proof}
\begin{proof}[{\rm \bf Proof of Theorem \ref{mainth}}]
Lemma \ref{lem;Linfty;uvw} directly shows Theorem \ref{mainth}. 
\end{proof}
%

%
%
\section{Stabilization}
In this section we will establish stabilization of 
solutions to \eqref{cp}. 
Here we assume that 
there exists a unique global 
classical solution $(u,v,w)$ of \eqref{cp} 
satisfying 
\begin{align*}
 \|u\|_{C^{\theta,\frac{\theta}{2}}
 (\ol{\Omega}\times[t,t+1])} 
 + \|v\|_{C^{\theta,\frac{\theta}{2}}
 (\ol{\Omega}\times[t,t+1])} 
 + \|w\|_{C^{\theta,\frac{\theta}{2}}
 (\ol{\Omega}\times[t,t+1])}
 \le M
\quad \mbox{for all}\ t\ge 1 
\end{align*}
with some $M>0$. 
we first recall a important lemma 
for the proof of Theorems \ref{mainth2} 
and \ref{mainth3} (see \cite[Lemma 4.6]{HKMY_1}). 

\begin{lem}\label{lem;forconvergence}
Let $n\in C^{0}(\overline{\Omega}\times[0,\infty))$ satisfy that 
there exist constants $C^{*} > 0$ and $\theta^{*} > 0$ such that  
\begin{align*}
\|n\|_{C^{\theta^{*}, \frac{\theta^{*}}{2}}(\overline{\Omega}\times[t, t+1])} \leq C^{*} 
\quad \mbox{for all}\ t \geq 1.
\end{align*}
Assume that 
\begin{align*}
\int_0^\infty 
\int_\Omega (n(x,t)-N^\ast)^2\,dxdt < \infty 
\end{align*}
with some constant $N^\ast>0$. 
Then 
\begin{align*}
  n(\cdot,t)\to N^\ast 
  \quad 
  \mbox{in}\
  C^{0}(\overline{\Omega})
  \quad 
  \mbox{as}\ t\to\infty.
\end{align*}
\end{lem}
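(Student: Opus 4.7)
The plan is to argue by contradiction. Suppose the conclusion fails. Then there exist $\varepsilon_0 > 0$, a sequence $t_k \to \infty$ with $t_k \ge 2$, and points $x_k \in \overline{\Omega}$ such that $|n(x_k, t_k) - N^\ast| \ge \varepsilon_0$ for all $k$. I may pass to a subsequence (still denoted $t_k$) so that $t_{k+1} > t_k + 1$.

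First I would use the uniform parabolic H\"older bound to promote this pointwise deviation to a deviation on a parabolic cylinder. Choose $\delta \in (0,1)$ so small that $2 C^\ast \delta^{\theta^\ast/2} \le \varepsilon_0/2$. Because $\|n\|_{C^{\theta^\ast,\theta^\ast/2}(\overline{\Omega}\times[t_k, t_k+1])} \le C^\ast$, for every $(x,t)$ with $|x - x_k| \le \delta$, $x \in \overline{\Omega}$ and $t \in [t_k, t_k + \delta^2]$, the estimate
\[
  |n(x,t) - n(x_k, t_k)|
  \le C^\ast \bigl(|x - x_k|^{\theta^\ast} + |t - t_k|^{\theta^\ast/2}\bigr)
  \le 2 C^\ast \delta^{\theta^\ast/2}
  \le \frac{\varepsilon_0}{2}
\]
holds, so $|n(x,t) - N^\ast| \ge \varepsilon_0/2$ on the set $Q_k := (B(x_k,\delta) \cap \Omega) \times [t_k, t_k + \delta^2]$.

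Next I would invoke the smoothness (hence the uniform interior cone condition) of $\partial\Omega$ to obtain a uniform lower bound $|B(x_k, \delta) \cap \Omega| \ge c_0 \delta^n$ with some $c_0 > 0$ depending only on $\Omega$, independently of $k$. Combined with the pointwise bound on $Q_k$, this yields
\[
  \int_{t_k}^{t_k+\delta^2} \!\! \int_{\Omega} (n(x,t) - N^\ast)^2 \, dx \, dt
  \ge \left(\frac{\varepsilon_0}{2}\right)^2 c_0 \delta^{n+2}
  =: c_1 > 0.
\]

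Finally, since $\delta < 1$ and $t_{k+1} > t_k + 1 > t_k + \delta^2$, the time intervals $[t_k, t_k + \delta^2]$ are pairwise disjoint, and summing over $k$ produces $\int_0^\infty \int_\Omega (n - N^\ast)^2 \ge \sum_k c_1 = \infty$, contradicting the hypothesis. The only mildly delicate step is the uniform volume estimate $|B(x_k,\delta) \cap \Omega| \ge c_0 \delta^n$, but this is a standard consequence of the smoothness of $\partial \Omega$; everything else reduces to selecting $\delta$ small enough to absorb the H\"older oscillation into $\varepsilon_0/2$.
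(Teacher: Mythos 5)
Your argument is correct, and since the paper does not prove this lemma itself but only cites it (Hirata--Kurima--Mizukami--Yokota, Lemma 4.6), your contradiction argument --- H\"older propagation of a pointwise deviation to a parabolic cylinder $\bigl(B(x_k,\delta)\cap\Omega\bigr)\times[t_k,t_k+\delta^2]$, a uniform lower volume bound from the smoothness of $\partial\Omega$, and summation over disjoint time intervals --- is precisely the standard proof behind that citation. The only cosmetic point is that on $[t_k,t_k+\delta^2]$ one actually has $|t-t_k|^{\theta^\ast/2}\le\delta^{\theta^\ast}$, so the oscillation is bounded by $2C^\ast\delta^{\theta^\ast}$; your weaker bound $2C^\ast\delta^{\theta^\ast/2}$ is still valid because $\delta<1$, so nothing breaks.
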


We next provide the following lemma which will 
be used to confirm that 
the assumption of Lemma \ref{lem;forconvergence} 
is satisfied. 

\begin{lem}\label{lem;justcal}
Let $a,b,c,d,e,f\in\Rone$. 
Suppose that 
\begin{align*}
  a>0,
\quad 
  d-\frac{b^2}{4a}>0,
\quad 
  f-\frac{c^2}{4a}-\frac{(2ae-bc)^2}{4a(4ad-b^2)}>0.
\end{align*}
Then 
\begin{align*}
ax^2+bxy+cxz+dy^2+eyz+fz^2\geq 0
\end{align*}
holds for all $x,y,z \in\Rone$. 
\end{lem}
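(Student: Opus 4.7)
The plan is to prove the quadratic form is nonnegative by completing the square successively in the variables $x$ and then $y$, which reduces the form to a sum of three squares whose coefficients are precisely the three quantities assumed to be positive.

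First, I would view $Q(x,y,z):=ax^2+bxy+cxz+dy^2+eyz+fz^2$ as a quadratic in $x$ with coefficient $a>0$. Completing the square in $x$ yields
\begin{align*}
Q(x,y,z)
= a\left(x+\frac{by+cz}{2a}\right)^{2}
+\left(d-\frac{b^{2}}{4a}\right)y^{2}
+\left(e-\frac{bc}{2a}\right)yz
+\left(f-\frac{c^{2}}{4a}\right)z^{2}.
\end{align*}
Set $D:=d-\frac{b^{2}}{4a}$, $E:=e-\frac{bc}{2a}$, $F:=f-\frac{c^{2}}{4a}$. The hypothesis gives $D>0$, so we may complete the square in $y$ for the residual form $Dy^{2}+Eyz+Fz^{2}$, obtaining
\begin{align*}
Dy^{2}+Eyz+Fz^{2}
= D\left(y+\frac{E}{2D}z\right)^{2}
+\left(F-\frac{E^{2}}{4D}\right)z^{2}.
\end{align*}

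The only remaining point is to identify the coefficient of $z^{2}$ with the third quantity assumed positive. A direct calculation gives
\begin{align*}
\frac{E^{2}}{4D}
=\frac{(e-\frac{bc}{2a})^{2}}{4(d-\frac{b^{2}}{4a})}
=\frac{(2ae-bc)^{2}/(4a^{2})}{(4ad-b^{2})/a}
=\frac{(2ae-bc)^{2}}{4a(4ad-b^{2})},
\end{align*}
so that $F-\frac{E^{2}}{4D}=f-\frac{c^{2}}{4a}-\frac{(2ae-bc)^{2}}{4a(4ad-b^{2})}$, which is positive by hypothesis.

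Combining the two displays expresses $Q(x,y,z)$ as a sum of three squares with strictly positive coefficients $a$, $D$, and $F-\frac{E^{2}}{4D}$, proving $Q\ge 0$ for all $x,y,z\in\mathbb{R}$. There is no real obstacle here; the only thing requiring care is the algebraic simplification identifying $\frac{E^{2}}{4D}$ with $\frac{(2ae-bc)^{2}}{4a(4ad-b^{2})}$, and guaranteeing that $4ad-b^{2}>0$ (equivalently $D>0$) so the denominator in the hypothesis is nonzero.
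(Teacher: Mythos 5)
Your proof is correct, and it carries out precisely the ``straightforward calculations'' (successive completion of the square in $x$ and then $y$) that the paper's one-line proof alludes to and outsources to \cite{Mizukami_DCDSB}. The algebraic identification of $\frac{E^2}{4D}$ with $\frac{(2ae-bc)^2}{4a(4ad-b^2)}$ checks out, so nothing is missing.
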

\begin{proof}
Straightforward calculations lead to 
the conclusion 
of this lemma 
(for more details, see \cite[Lemma 3.2]{Mizukami_DCDSB}). 
\end{proof}

Finally, we give the following lemma which enables us to 
upgrade the $L^2$-convergence rate 
to $L^\infty$-convergence rate. 
\begin{lem}\label{L2toLinfty}
 Let $(\ubar,\vbar,\wbar)\in \mathbb{R}^3$ be 
 a solution to \eqref{cp}. 
 Assume that there exists a decreasing function 
 $h:[0,\infty)\to\mathbb{R}$ satisfying  
 \begin{align*}
   \lp{2}{u\cd-\ubar}
   +\lp{2}{v\cd-\vbar}
   \le h(t)
   \quad \mbox{for all}\ t>0.
 \end{align*} 
 Then there exists $C>0$ such that 
 \begin{align*}
   \lp{\infty}{u\cd-\ubar}
   + \lp{\infty}{v\cd-\vbar}
   + \lp{\infty}{w\cd-\wbar}
   \le Ch(t-1)^{\frac{1}{n+1}}
 \end{align*}
for all $t>1$. 
\end{lem}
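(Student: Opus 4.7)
The plan is to reduce the $L^\infty$ convergence rate to the given $L^2$ convergence rate by a Gagliardo--Nirenberg-type interpolation that exploits the standing time--space H\"older bound, and then to propagate the rate from $u$ and $v$ to $w$ through the elliptic third equation of \eqref{cp}.

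First I would fix $t>1$ and work with the differences $u\cd-\ubar$, $v\cd-\vbar$, and $w\cd-\wbar$. The standing H\"older assumption furnishes a $t$-independent constant $L>0$ bounding the spatial H\"older seminorms of $u\cd-\ubar$ and $v\cd-\vbar$ for every $t\ge 1$. The central tool is the interpolation estimate
\begin{equation*}
\|f\|_{L^\infty(\Omega)} \le C_0\,\|f\|_{C^{\sigma}(\overline{\Omega})}^{n/(n+2\sigma)}\,\|f\|_{L^2(\Omega)}^{2\sigma/(n+2\sigma)} \qquad (\sigma\in(0,1]),
\end{equation*}
proved by the familiar ball argument: at a near-maximizer $x_0$ of $|f|$, H\"older continuity guarantees $|f|\ge\tfrac12\|f\|_{L^\infty(\Omega)}$ on $\Omega\cap B(x_0,r)$ with $r\sim(\|f\|_{L^\infty(\Omega)}/L)^{1/\sigma}$, and the lower measure bound $|\Omega\cap B(x_0,r)|\gtrsim r^n$ coming from the smoothness of $\pa\Omega$ permits integrating $f^2$ to conclude. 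Specializing to $\sigma=\tfrac12$---which is available because the H\"older exponent $\theta$ supplied by the parabolic regularity theory used in Section 2 may be taken to satisfy $\theta\ge\tfrac12$---gives the $L^2$-exponent $2\sigma/(n+2\sigma)=1/(n+1)$, and the monotonicity of $h$ then yields
\begin{equation*}
\lp{\infty}{u\cd-\ubar}+\lp{\infty}{v\cd-\vbar} \le C\,h(t-1)^{1/(n+1)} \quad \mbox{for all}\ t>1.
\end{equation*}

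For the chemical component, subtracting the constant-state identity $\alpha\ubar+\beta\vbar=\gamma\wbar$ from the third equation of \eqref{cp} yields the Neumann elliptic problem $-d_3\Delta(w-\wbar)+\gamma(w-\wbar)=\alpha(u-\ubar)+\beta(v-\vbar)$ in $\Omega$. The standard $L^\infty$ elliptic estimate for this problem---a direct consequence of the maximum principle since $\gamma>0$---produces $\lp{\infty}{w\cd-\wbar}\le\gamma^{-1}(\alpha\lp{\infty}{u\cd-\ubar}+\beta\lp{\infty}{v\cd-\vbar})$, and combining with the bound from the previous paragraph closes the argument. The main point demanding care is the calibration of the interpolation exponent to exactly $1/(n+1)$---that is, the choice $\sigma=\tfrac12$ and the justification that the requisite H\"older regularity is genuinely available---while the elliptic step does not affect the rate and the shift $t\mapsto t-1$ in the conclusion merely accommodates the monotonicity of $h$ and the fact that the standing H\"older bound is stated for $t\ge 1$.
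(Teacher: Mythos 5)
Your route is genuinely different from the paper's, and it contains one gap on which the whole exponent $\tfrac{1}{n+1}$ hinges: the claim that the H\"older exponent $\theta$ ``may be taken to satisfy $\theta\ge\tfrac12$.'' Nothing in the paper provides this. The standing hypothesis of Section 3 (and of Theorems \ref{mainth2}--\ref{mainth3}) is only that the solution obeys a uniform $C^{\theta,\theta/2}$ bound for \emph{some} $\theta\in(0,1)$, and Lemma \ref{lem;Linfty;uvw} obtains that $\theta$ from \cite[Proposition 2.3]{Cao-Wang-Yu_2016}, i.e.\ from De Giorgi--Nash--Moser type regularity, which yields an unspecified, possibly small exponent. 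Your ball-argument interpolation
\begin{equation*}
\lp{\infty}{f}\le C_0\,\|f\|_{C^{\theta}(\ol{\Omega})}^{\,n/(n+2\theta)}\,\lp{2}{f}^{\,2\theta/(n+2\theta)}
\end{equation*}
is correct as stated, but with the available $\theta$ it only delivers the rate $h(t)^{2\theta/(n+2\theta)}$, which for $\theta<\tfrac12$ is strictly \emph{weaker} than $h(t-1)^{1/(n+1)}$ (since $h$ is small for large $t$, a smaller exponent means a larger bound). To close your argument you would need an additional uniform-in-time bootstrap (e.g.\ elliptic $W^{2,q}$ regularity for $w$ giving $\nabla w$ bounded and H\"older, then parabolic $L^p$/Schauder theory for $u,v$) to raise the spatial H\"older exponent to at least $\tfrac12$; this is plausible but is a nontrivial extra step that you assert rather than prove, and it is not how the exponent $\tfrac{1}{n+1}$ actually arises.

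For contrast, the paper never interpolates against a H\"older norm. It interpolates $L^2$ against $L^\infty$ \emph{boundedness} to get $\lp{p}{u\cd-\ubar}+\lp{p}{v\cd-\vbar}\le C\,h(t)^{2/p}$ for any $p>2$, chooses $p=2n+2$ so that $2/p=\tfrac1{n+1}$, feeds this through the elliptic estimate \eqref{regularity;w} to get $\lp{2n+2}{\nabla w\cd}\le C\,h(t)^{1/(n+1)}$, and then upgrades $L^{2n+2}$ decay of $u-\ubar$, $v-\vbar$ and $\nabla w$ to $L^\infty$ decay at the \emph{same} rate via a variation-of-constants representation on $[t-1,t]$ and $L^{2n+2}$--$L^\infty$ smoothing of the Neumann heat semigroup (the argument of \cite[Lemma 3.6]{Bai-Winkler_2016}); this is also where the shift $t\mapsto t-1$ comes from. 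Your final step for $w$ (maximum principle applied to $-d_3\Delta(w-\wbar)+\gamma(w-\wbar)=\alpha(u-\ubar)+\beta(v-\vbar)$) coincides with the paper's and is fine. If you do carry out the H\"older bootstrap, your method would in fact give the better exponent $2\theta/(n+2\theta)\to 2/(n+2)$ as $\theta\to1$, but as written the proof does not establish the stated rate.
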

\begin{proof}
For all $p>2$ 
we first obtain from the H\"older inequality 
that 
\begin{align*}
  \lp{p}{f}\le 
  \lp{\infty}{f}^{1-\frac{2}{p}}\lp{2}{f}^{\frac{2}{p}}
\end{align*}
holds for all $f\in L^{\infty}(\Omega)$, which means 
from the boundedness of $u,v$ that 
\begin{align*} 
  \lp{p}{u\cd - \ubar}
  + \lp{p}{v\cd - \vbar}
  \le C_1(p) h(t)^{\frac 2p}
\quad \mbox{for all}\ t>0
\end{align*}
with some $C_1(p)>0$. 
Here \eqref{regularity;w} enables us to see that 
\begin{align*}
 \| w\cd -\wbar\|_{W^{2,2n+2}(\Omega)} 
 \le C_E(2n+2)C_1(2n+2) h(t)^{\frac{1}{n+1}} 
 \quad \mbox{for all}\ t>0. 
\end{align*}
Thus we have that there is $C_2>0$ such that 
\begin{align*}
  \lp{2n+2}{\nabla w\cd} 
  \le C_2 h(t)^{\frac{1}{n+1}}
  \quad \mbox{for all}\ t>0. 
\end{align*}
Then by using a similar argument 
as in the proof of \cite[Lemma 3.6]{Bai-Winkler_2016} 
we infer that there exists $C_3>0$ such that 
\begin{align*}
  \lp{\infty}{u\cd-\ubar}
  + \lp{\infty}{v\cd-\vbar}
  \le C_3h(t-1)^{\frac{1}{n+1}}
  \quad \mbox{for all}\ t>1. 
\end{align*}
Finally, since $(\ubar,\vbar,\wbar)$ satisfies 
\[
\alpha \ubar+\beta \vbar -\gamma \wbar=0, 
\]
we can apply the maximum principle to 
\[
  -\Delta (w-\wbar) + \gamma (w - \wbar) 
  = \alpha (u-\ubar) + \beta (v-\vbar), 
\]
and hence obtain  
the existence of a constant $C_4>0$ such that 
\begin{align*}
 \lp{\infty}{w\cd-\wbar} 
 &\le C_4 
 (\lp{\infty}{u\cd -\ubar}+\lp{\infty}{v\cd-\vbar})
\\ 
 &\le C_3C_4 h(t-1)^{\frac 1{n+1}} 
 \quad \mbox{for all}\ t>1, 
\end{align*}
which concludes the proof of this lemma. 
\end{proof}

%
%

\subsection{Convergence. Case 1: $a_1,a_2\in (0,1)$}
In this subsection we establish stabilization 
in the case that $a_1,a_2\in (0,1)$. 
We first confirm that the assumption of Lemma 
\ref{lem;forconvergence} are satisfied. 

%
%
%
%
%
\begin{lem}\label{lem;energy}
Assume that 
\eqref{case1condition1}{\rm --}\eqref{case1condition3} 
are satisfied. 
Then there exist a nonnegative function 
$E_1:(0,\infty)\to \mathbb{R}$ and 
a constant $\ep>0$ such that 
\begin{align}\label{purpose;case1}
  \frac d{dt}E_1(t) 
  \le - \ep \into 
  \left[
  (u\cd -\uast)^2
  +(v\cd-\vast)^2
  + (w\cd-\wast)^2
  \right]
\end{align}
holds for all $t>0$. 
Moreover, there exists $C>0$ satisfying 
\begin{align*}
 \int_0^\infty \into (u-\uast)^2 
 + \int_0^\infty \into (v-\vast)^2
 + \int_0^\infty \into (w-\wast)^2
 \le C. 
\end{align*}
\end{lem}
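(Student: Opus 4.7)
The natural strategy is to construct a Lyapunov functional of relative-entropy type, differentiate along \eqref{cp}, dispose of the chemotaxis and diffusion contributions by a tight Young's inequality, and reduce the remainder to an integrated quadratic form in $(u-\uast,v-\vast,w-\wast)$ whose definiteness is extracted from Lemma \ref{lem;justcal} under exactly the hypotheses \eqref{case1condition1}--\eqref{case1condition3}. Concretely I would set
\[
  E_1(t) := \into\!\left( u-\uast-\uast\ln\tfrac{u}{\uast}\right) + A\into\!\left( v-\vast-\vast\ln\tfrac{v}{\vast}\right),
\]
with the weight $A:=\frac{\delta_1\mu_1 a_1}{\mu_2 a_2}>0$, chosen so that $\mu_1 a_1+A\mu_2 a_2=\mu_1 a_1(1+\delta_1)$. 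Convexity of $s\mapsto s-\uast-\uast\ln(s/\uast)$ gives $E_1\ge 0$, and the strong maximum principle applied to the first two equations of \eqref{cp} yields $u,v>0$ on $\ol{\Omega}$ for $t>0$, so $E_1$ is finite and smooth in $t$.

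Differentiating along \eqref{cp}, integrating by parts, and using the equilibrium identities $\uast+a_1\vast=1=a_2\uast+\vast$ produces
\begin{align*}
  \frac{d}{dt}E_1(t)
  &= -d_1\uast\into\frac{|\nabla u|^2}{u^2}-A d_2\vast\into\frac{|\nabla v|^2}{v^2}+\chi_1\uast\into\frac{\nabla u\cdot\nabla w}{u}+A\chi_2\vast\into\frac{\nabla v\cdot\nabla w}{v} \\
  &\quad -\mu_1\into(u-\uast)^2-A\mu_2\into(v-\vast)^2-(\mu_1 a_1+A\mu_2 a_2)\into(u-\uast)(v-\vast).
\end{align*}
A tight Young's inequality bounds the two chemotaxis terms by the diffusion dissipation plus $K\into|\nabla w|^2$ with $K:=\frac{\chi_1^2\uast}{4d_1}+\frac{A\chi_2^2\vast}{4d_2}$. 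Next, testing the third equation of \eqref{cp} against $w-\wast$ and using $\alpha\uast+\beta\vast=\gamma\wast$ gives the identity
\[
  d_3\into|\nabla w|^2 = \alpha\into(u-\uast)(w-\wast)+\beta\into(v-\vast)(w-\wast)-\gamma\into(w-\wast)^2,
\]
so that after substitution the right-hand side of $\frac{d}{dt}E_1$ becomes the integral of a single pointwise quadratic form $Q(x,y,z)$ in $(x,y,z):=(u-\uast,v-\vast,w-\wast)$.

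The final step is to show $Q\ge\ep(x^2+y^2+z^2)$ for some $\ep>0$. I would apply Lemma \ref{lem;justcal} with $a=\mu_1$, $d=A\mu_2$, $f=K\gamma/d_3$, $b=\mu_1 a_1(1+\delta_1)$, $c=-K\alpha/d_3$, $e=-K\beta/d_3$, and then subtract off a small $\ep(x^2+y^2+z^2)$ to upgrade nonnegativity to strict positivity. With the chosen $A$, the quantity $4ad-b^2$ reduces to $\frac{\mu_1^2 a_1}{a_2}\bigl[4\delta_1-a_1 a_2(1+\delta_1)^2\bigr]$, which is positive by \eqref{case1condition1}; and substituting $\uast=\frac{1-a_1}{1-a_1a_2}$, $\vast=\frac{1-a_2}{1-a_1a_2}$ into $K$, the third positivity condition $f-\frac{c^2}{4a}-\frac{(2ae-bc)^2}{4a(4ad-b^2)}>0$ reproduces, after a short rearrangement, precisely the double bound \eqref{case1condition2}--\eqref{case1condition3} (this is the computation carried out in \cite[Lemma 3.2]{Mizukami_DCDSB}). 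Since all three inequalities are strict they persist for some small $\ep>0$, delivering \eqref{purpose;case1}; integrating on $(\tau,t)$ for a small $\tau>0$ and using $E_1\ge 0$ then yields $\ep\int_\tau^t\into\bigl[(u-\uast)^2+(v-\vast)^2+(w-\wast)^2\bigr]\le E_1(\tau)$ uniformly in $t$, which is the asserted space-time $L^2$ bound.

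The main obstacle is algebraic bookkeeping: one must verify that the third determinant condition from Lemma \ref{lem;justcal}, after the specific substitutions for $A$, $K$, $\uast$ and $\vast$, collapses to the \emph{pair} \eqref{case1condition2}--\eqref{case1condition3} rather than a single coupled inequality. The natural way to read this is to split $K=K_u+K_v$ according to its two summands and split $Q$ correspondingly; each summand then yields one of \eqref{case1condition2}, \eqref{case1condition3} via a separate application of Lemma \ref{lem;justcal}. Apart from this careful accounting, the argument is a direct parabolic-elliptic adaptation of the standard Lyapunov technique for competitive Lotka--Volterra systems.
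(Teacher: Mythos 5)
Your proposal is correct and follows essentially the same route as the paper: the same weighted entropy functional $E_1$ with $A=\frac{a_1\mu_1\delta_1}{a_2\mu_2}$, Young's inequality to absorb the chemotaxis terms into the diffusion dissipation, the elliptic identity for $\into|\nabla w|^2$, and Lemma \ref{lem;justcal} plus a continuity-in-$\ep$ argument, with \eqref{case1condition1} giving $4ad-b^2>0$ and \eqref{case1condition2}--\eqref{case1condition3} giving the third positivity condition. The only cosmetic difference is that you take the tight Young constant $K=\frac{\chi_1^2\uast}{4d_1}+\frac{A\chi_2^2\vast}{4d_2}$ directly, whereas the paper inserts an auxiliary parameter $\delta_2$ strictly between the tight constant and the positivity threshold; both yield the same conclusion since \eqref{case1condition2}--\eqref{case1condition3} are exactly the statements that each summand of $K$ lies strictly below its share of that threshold.
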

\begin{proof}
Let $\delta_1>0$ be a constant defined in 
\eqref{case1condition1}{\rm --}\eqref{case1condition3}. 
First we shall show that the function $E_1:(0,\infty)\to \mathbb{R}$ defined as 
\begin{align}\label{defE}
 E_1:= \into \left(
 u-\uast-\uast \log \frac{u}{\uast}
 \right)
 + \frac{a_1\mu_1\delta_1}{a_2\mu_2}
 \into \left(
 v-\vast-\vast \log \frac{v}{\vast}
 \right)
\end{align}
satisfies that \eqref{purpose;case1} holds 
for all $t>0$ with some $\ep>0$. 
From straightforward calculations 
we infer that 
\begin{align}\notag
  \frac d{dt} E_1(t) 
  &= -\mu_1 \into (u\cd-\uast)^2 
  - (1+\delta_1)a_1\mu_1\into (u\cd-\uast)(v\cd-\vast) 
\\\notag &\quad\,
  - \frac{a_1\mu_1\delta_1}{\mu_2}\into (v\cd-\vast)^2 
  - d_1\uast\into \frac{|\nabla u\cd|^2}{u^2} 
\\\notag &\quad\, 
  + \uast \chi_1\into \frac{\nabla u\cd\cdot \nabla w\cd}{u} 
  - \frac{d_2a_1\mu_1 \vast\delta_1}{a_2\mu_2}
    \into \frac{|\nabla v\cd|^2}{v^2}
\\\label{derE} &\quad \,
  + \frac{a_1\mu_1\vast \chi_2\delta_1}{a_2\mu_2}
    \into \frac{\nabla v\cd\cdot \nabla w\cd}{v}
  \quad \mbox{for all}\ t>0. 
\end{align}
Here in light of \eqref{case1condition1}{\rm --}\eqref{case1condition3} 
we can take $\delta_2>0$ satisfying 
\begin{align*}
  \frac{\uast \chi_1^2}{4d_1}
  < \delta_2 
  < \frac{d_3a_1\mu_1\gamma (4\delta_1 - (1+\delta_1)^2a_1a_2)}
    {(1+\delta_1)(a_1\alpha^2 \delta_1 + a_2\beta^2
    -(1+\delta_1) a_1a_2\alpha \beta)}
\end{align*}
and 
\begin{align*}
  \frac{a_1\mu_1\vast \chi_2^2}{4d_2a_2\mu_2}
  < \delta_2 
  < \frac{d_3a_1\mu_1\gamma (4\delta_1 - (1+\delta_1)^2a_1a_2)}
    {(1+\delta_1)(a_1\alpha^2 \delta_1 + a_2\beta^2
    -(1+\delta_1) a_1a_2\alpha \beta)}. 
\end{align*}
Invoking the Young inequality, we obtain that 
\begin{align}\label{Young;1}
  \uast\chi_1\into \frac{\nabla u\cdot \nabla w}{u}
  \le \frac{{\uast}^2\chi_1^2}{4\delta_2}
      \into \frac{|\nabla u|^2}{u^2} 
    + \delta_2 \into |\nabla w|^2
\end{align}
and 
\begin{align}\label{Young;2}
  \frac{a_1\mu_1 \vast\chi_2\delta_1}{a_2\mu_2} 
  \into \frac{\nabla v\cdot \nabla w}{v} 
  \le \frac{a_1^2\mu_1^2 {\vast}^2\chi_2^2\delta_1}{4\delta_2}
  \into \frac{|\nabla v|^2}{v^2}
  + \delta_1\delta_2 \into |\nabla w|^2. 
\end{align}
Therefore since the definition of $\delta_2$ yields 
\begin{align*}
     d_1 - \frac{\uast \chi_1^2}{4\delta_2}>0 
     \quad \mbox{and}\quad 
      d_2-\frac{a_1\mu_1\vast\chi_2^2}{4a_2\mu_2\delta_2}>0, 
\end{align*} 
a combination of \eqref{derE} with 
\eqref{Young;1} 
and \eqref{Young;2} implies 
\begin{align*}
  \frac{d}{dt}E_1(t) 
  &\le  
   -\mu_1 \into (u\cd-\uast)^2 
  - (1+\delta_1)a_1\mu_1\into (u\cd-\uast)(v\cd-\vast) 
\\ 
&\quad\,
  - \frac{a_1\mu_1\delta_1}{\mu_2}\into (v\cd-\vast)^2
   + (1+\delta_1)\delta_2 \into |\nabla w\cd|^2
  \quad \mbox{for all}\ t>0. 
\end{align*}
Noting from the third equation in \eqref{cp} that 
\begin{align*}
  \into |\nabla w|^2 
  = \frac \alpha{d_3} \into (u-\uast)(w-\wast)  
    + \frac \beta{d_3} \into (v-\vast)(w-\wast)
    - \frac \gamma{d_3} \into (w-\wast)^2, 
\end{align*}
we establish that 
\begin{align*}
    \frac{d}{dt}E_1(t) \le F_1(t) 
    \quad \mbox{for all}\ t>0, 
\end{align*}
where  
\begin{align*}
  F_1(t)&:=
  -\mu_1 \into (u\cd-\uast)^2 
  - (1+\delta_1)a_1\mu_1\into (u\cd-\uast)(v\cd-\vast) 
\\ 
&\quad\,
  - \frac{a_1\mu_1\delta_1}{\mu_2}\into (v\cd-\vast)^2
  + \frac{\alpha(1+\delta_1)\delta_2}{d_3} 
    \into (u\cd-\uast)(w\cd-\wast)
\\ 
&\quad \,
  + \frac{\beta(1+\delta_1)\delta_2}{d_3} 
    \into (v\cd-\vast)(w\cd-\wast)
  - \frac{\gamma(1+\delta_1)\delta_2}{d_3} 
    \into (w\cd-\wast)^2
\end{align*}
for all $t>0$. 
In order to see \eqref{purpose;case1} 
we will show that 
\begin{align*}
  F_1(t) \le 
  - \ep \into 
  \left[
  (u\cd -\uast)^2
  +(v\cd-\vast)^2
  + (w\cd-\wast)^2
  \right]
\end{align*}
with some $\ep>0$ by using Lemma \ref{lem;justcal}. 
To confirm that the assumption of 
Lemma \ref{lem;justcal} is satisfied 
we put 
\begin{align*}
 g_1(\ep)&:= \mu_1 -\ep, \qquad 
 g_2(\ep) := \frac{a_1\mu_1\delta_1}{a_2} - \ep 
             -\frac{(1+\delta_1)^2a_1^2\mu_1^2}{4(\mu_1-\ep)}, 
\\
 g_3(\ep)&:= \frac{\gamma (1+\delta_1)\delta_2}{d_3} -\ep 
             - \frac{\alpha^2(1+\delta_1)^2\delta_2^2}{4d_3^2(\mu_1-\ep)}
\\ &\quad \
             - \frac{\left(2(\mu_1-\ep)\beta(1+\delta_1)\delta_2
                     - (1+\delta_1)^2a_1\mu_1\alpha\delta_1
                     \right)^2
               }{4d_3^2(\mu_1-\ep)
               \left(
               4(\mu_1-\ep)(\frac{a_1\mu_1\delta_1}{a_2}-\ep)
               -(1+\delta_1)^2a_1\mu_1\right)}
\end{align*}
for $\ep>0$, and shall see that there exists $\ep_1>0$ such that 
$g_i(\ep_1)>0$ for $i=1,2,3$. 
Here $g_1(0)=\mu_1>0$ obviously holds, 
and the condition \eqref{case1condition1} implies that 
\begin{align*}
 g_2(0)=\frac{a_1\mu_1(4\delta_1 
 - (1+\delta_1)^2a_1a_2)}{4a_2}>0. 
\end{align*}
Moreover, aided by the definition of $\delta_2$, 
we can obtain that 
\begin{align*}
  g_3(0)&= \frac{\gamma (1+\delta_1)\delta_2}{d_3} 
             - \frac{\alpha^2(1+\delta_1)^2\delta_2^2}{4d_3^2\mu_1}
             - \frac{\left(2\mu_1\beta(1+\delta_1)\delta_2
                     - (1+\delta_1)^2a_1\mu_1\alpha\delta_1
                     \right)^2
               }{4d_3^2\mu_1
               \left(
               4\mu_1\frac{a_1\mu_1\delta_1}{a_2}
               -(1+\delta_1)^2a_1\mu_1\right)}
\\
       &= 
       (1+\delta_1)\delta_2\left(
       \frac{\gamma}{d_3} 
        - \frac{
        (1+\delta_1)
        (\alpha^2a_1\delta_1 + 
        a_2\beta^2 - (1+\delta_1)a_1a_2\alpha\beta)
         }{d_3^2a_1\mu_1(4\delta_1-(1+\delta_1)^2a_1a_2)}
         \delta_2
         \right)>0.
\end{align*}
Therefore a combination of the above inequalities 
and the continuity argument implies 
that there exists $\ep_1>0$ such that $g_i(\ep_1)>0$ 
hold for $i=1,2,3$. 
Thus Lemma \ref{lem;justcal} derives that 
\begin{align*}
  F_1(t) \le -\ep_1 \into 
  \left[
  (u\cd-\uast)^2+(v\cd-\vast)^2+(w\cd-\wast)^2
  \right]
  \quad \mbox{for all}\ t>0, 
\end{align*}
which yields that \eqref{purpose;case1} holds 
for all $t>0$. Then since from the Taylor formula 
$E_1$ is a nonnegative function for $t>0$ 
(more details, see \cite[Lemma 3.2]{Bai-Winkler_2016}), 
integrating \eqref{purpose;case1} over $(0,\infty)$ 
concludes the proof of this lemma. 
\end{proof} 
%
%
%
%
\begin{lem}\label{lem;con;sol}
Assume that 
\eqref{case1condition1}{\rm --}\eqref{case1condition3} 
are satisfied. Then 
\begin{align*}
 \lp{\infty}{u\cd-\uast}
 +\lp{\infty}{v\cd-\vast}
 +\lp{\infty}{w\cd-\wast}
 \to 0
\quad 
 \mbox{as}\ t\to \infty. 
\end{align*}
\end{lem}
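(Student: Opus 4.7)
The plan is to combine the space-time $L^{2}$-integrability produced by Lemma \ref{lem;energy} with the uniform H\"older regularity standing in the hypotheses of Section 3, and then quote Lemma \ref{lem;forconvergence} componentwise. There are no genuinely new estimates to build here: the heavy lifting (construction of the Lyapunov functional $E_{1}$ and the algebraic conditions \eqref{case1condition1}{\rm--}\eqref{case1condition3} that make it dissipative) has already been done in Lemma \ref{lem;energy}, and the convergence mechanism has been isolated in Lemma \ref{lem;forconvergence}.

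First, I would apply Lemma \ref{lem;energy} to extract, under \eqref{case1condition1}{\rm--}\eqref{case1condition3}, the three finite integrals
\begin{align*}
\int_{0}^{\infty}\into (u-\uast)^{2}\,dx\,dt,\quad
\int_{0}^{\infty}\into (v-\vast)^{2}\,dx\,dt,\quad
\int_{0}^{\infty}\into (w-\wast)^{2}\,dx\,dt<\infty.
\end{align*}
This is precisely the $L^{2}$-in-space-time hypothesis of Lemma \ref{lem;forconvergence} with $N^{\ast}$ being $\uast$, $\vast$, or $\wast$ in turn.

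Second, the assumption of Section 3 supplies the other ingredient of Lemma \ref{lem;forconvergence}: there exist $\theta\in(0,1)$ and $M>0$ with
\begin{align*}
\|u\|_{C^{\theta,\theta/2}(\overline{\Omega}\times[t,t+1])}
+\|v\|_{C^{\theta,\theta/2}(\overline{\Omega}\times[t,t+1])}
+\|w\|_{C^{\theta,\theta/2}(\overline{\Omega}\times[t,t+1])}\leq M
\quad\text{for all } t\ge 1,
\end{align*}
so each of $u,v,w$ individually satisfies the uniform-in-time H\"older bound required. Applying Lemma \ref{lem;forconvergence} once with $n:=u$, $N^{\ast}:=\uast$, once with $n:=v$, $N^{\ast}:=\vast$, and once with $n:=w$, $N^{\ast}:=\wast$, I would conclude $u(\cdot,t)\to \uast$, $v(\cdot,t)\to\vast$ and $w(\cdot,t)\to\wast$ in $C^{0}(\overline{\Omega})$ as $t\to\infty$.

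Finally, uniform convergence on the bounded set $\overline{\Omega}$ coincides with convergence in $L^{\infty}(\Omega)$, so adding the three limits produces the claimed $L^{\infty}$-convergence statement. I do not foresee a real obstacle; if anything is delicate, it is only the formal remark that $\uast,\vast,\wast$ are genuinely positive (which follows from $a_{1},a_{2}\in(0,1)$), so that the $L^{2}$-integrals referenced in Lemma \ref{lem;forconvergence} make sense with a positive target. The proof is thus a one-line assembly and will be written as such.
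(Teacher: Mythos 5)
Your proposal is correct and is exactly the argument the paper uses: its proof of this lemma is the one-line combination of Lemma \ref{lem;energy} (giving the finite space--time $L^{2}$ integrals) with Lemma \ref{lem;forconvergence} (applied componentwise to $u$, $v$, $w$ under the standing H\"older bound). Your spelled-out version, including the observation that $\uast,\vast,\wast>0$, adds nothing beyond what the paper intends and requires no correction.
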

\begin{proof}
A combination of 
Lemmas \ref{lem;forconvergence} and \ref{lem;energy} 
implies this lemma. 
\end{proof}
%
%
%
%
Next we desire to establish convergence rates 
for the solution of \eqref{cp}. We note that 
in view of Lemma \ref{L2toLinfty} it is sufficient to 
confirm the $L^2$-convergence rates for the solution. 
\begin{lem}\label{lem;decay;L2}
Assume that 
\eqref{case1condition1}{\rm --}\eqref{case1condition3} 
are satisfied. 
Then there exist $C > 0$ and $\ell > 0$ such that 
\begin{align*}
\lp{2}{u\cd-\uast}
+\lp{2}{v\cd-\vast}
\le Ce^{-\ell t}
\quad \mbox{for all}\ t>0.
\end{align*}
\end{lem}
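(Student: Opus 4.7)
The plan is to exploit the Lyapunov functional $E_1$ built in Lemma \ref{lem;energy} together with the qualitative convergence from Lemma \ref{lem;con;sol}. The key is to show that once $u(\cdot,t)$ and $v(\cdot,t)$ are close enough to $\uast$ and $\vast$, the functional $E_1(t)$ is comparable (from above and below) to $\into[(u-\uast)^2+(v-\vast)^2]$, and then to close a differential inequality of the form $\frac{d}{dt}E_1(t)\le -c\,E_1(t)$.

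First I would observe that for fixed $s^\ast>0$ the one-variable function $\phi(s):=s-s^\ast-s^\ast\log(s/s^\ast)$ satisfies $\phi(s^\ast)=0$, $\phi'(s^\ast)=0$, and $\phi''(s)=s^\ast/s^2$, so that Taylor's theorem yields constants $k_1,k_2>0$ depending only on $s^\ast$ with
\[
k_1(s-s^\ast)^2\le \phi(s)\le k_2(s-s^\ast)^2
\qquad\text{whenever }s\in[s^\ast/2,3s^\ast/2].
\]
By Lemma \ref{lem;con;sol}, we have $\lp{\infty}{u\cd-\uast}+\lp{\infty}{v\cd-\vast}\to 0$ as $t\to\infty$, so there exists $t_0>0$ such that $u(x,t)\in[\uast/2,3\uast/2]$ and $v(x,t)\in[\vast/2,3\vast/2]$ for all $x\in\Omega$ and $t\ge t_0$. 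Applied pointwise to the definition \eqref{defE}, this produces constants $c_1,c_2>0$ with
\[
c_1\into\bigl[(u\cd-\uast)^2+(v\cd-\vast)^2\bigr]\le E_1(t)\le c_2\into\bigl[(u\cd-\uast)^2+(v\cd-\vast)^2\bigr]
\qquad\text{for all }t\ge t_0.
\]

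Combining the upper estimate with the dissipation inequality \eqref{purpose;case1} from Lemma \ref{lem;energy}, I would get
\[
\frac{d}{dt}E_1(t)\le -\ep\into\bigl[(u\cd-\uast)^2+(v\cd-\vast)^2+(w\cd-\wast)^2\bigr]\le -\frac{\ep}{c_2}E_1(t)
\qquad\text{for all }t\ge t_0,
\]
so that Gronwall's lemma yields $E_1(t)\le E_1(t_0)\,e^{-(\ep/c_2)(t-t_0)}$. Using the lower bound then gives $\lp{2}{u\cd-\uast}^2+\lp{2}{v\cd-\vast}^2\le c_1^{-1}E_1(t_0)\,e^{-(\ep/c_2)(t-t_0)}$ for $t\ge t_0$, which extends to all $t>0$ after absorbing the constant on the compact interval $[0,t_0]$ where $u,v$ are bounded by Lemma \ref{lem;Linfty;uvw}.

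No step of this plan seems to pose a real obstacle: the dissipation estimate of Lemma \ref{lem;energy} and the qualitative convergence of Lemma \ref{lem;con;sol} do all the heavy lifting, and what remains is a standard Lyapunov/Gronwall argument together with the elementary Taylor expansion of the relative-entropy density $\phi$. The only point that requires care is that the lower bound $\phi(s)\ge k_1(s-s^\ast)^2$ is local around $s^\ast$; it is precisely the uniform-in-$x$ convergence supplied by Lemma \ref{lem;con;sol} (as opposed to merely $L^2$ convergence obtained from Lemma \ref{lem;energy}) that guarantees entrance into the Taylor regime uniformly over $\Omega$ from some time $t_0$ onward.
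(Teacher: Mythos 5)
Your proposal is correct and follows essentially the same route as the paper: the paper likewise establishes the two-sided comparability $C_1\into[(u-\uast)^2+(v-\vast)^2]\le E_1(t)\le C_2\into[(u-\uast)^2+(v-\vast)^2]$ for $t>t_0$ (via Lemma \ref{lem;con;sol} and a L'H\^opital-type argument, which is the same local quadratic behaviour of $s\mapsto s-s^\ast-s^\ast\log(s/s^\ast)$ that you obtain by Taylor expansion), and then closes the Gronwall inequality from \eqref{purpose;case1}. Your explicit Taylor bound and the remark on why uniform convergence is needed to enter the quadratic regime are exactly the content the paper delegates to the cited references.
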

\begin{proof}
Aided by Lemma \ref{lem;con;sol} and 
the L'H\^opital theorem, a similar argument 
as in the proof of \cite[Lemma 3.7]{Bai-Winkler_2016} 
(or \cite[Proof of Theorem 1.2]{Mizukami_DCDSB}) 
derives that there exist $C_1,C_2>0$ and $t_0>0$ 
such that for all $t>t_0$, 
\begin{align}\label{goodrela;case1}
 C_1\left(
 \into (u-\uast)^2 
 + \into (v-\vast)^2
 \right) 
 \le 
 E_1
 \le 
 C_2 \left(
 \into (u-\uast)^2
 + \into (v-\vast)^2
 \right), 
\end{align}
where $E_1$ is the function defined as \eqref{defE}. 
Therefore we obtain from \eqref{purpose;case1} that 
\begin{align*}
\frac d{dt}E_1(t) \le - C_3 E_1(t) 
\quad \mbox{for all}\ t>t_0 
\end{align*}
with some $C_3>0$, which implies that 
there exists $C_4>0$ such that 
\begin{align}\label{Eineq;case1;ex}
  E_1(t) \le C_4 e^{-C_3t}
  \quad \mbox{for all}\ t>0. 
\end{align}
Thus a combination of 
\eqref{goodrela;case1} and \eqref{Eineq;case1;ex} 
yields that 
\begin{align*}
  \into (u-\uast)^2 
  + \into (v-\vast)^2
  \le \frac{C_4}{C_1}e^{-C_3 t},
\end{align*}
which concludes the proof of this lemma. 
\end{proof}
%
%
%
%
\subsection{Convergence. Case 2: $a_1\ge 1 > a_2$}

In this subsection we will obtain stabilization 
in the case that $a_1\ge 1 > a_2$. 
In this case we also have to confirm that the assumption 
of Lemma \ref{lem;forconvergence} is satisfied. 

\begin{lem}\label{lem;energy;case2}
Assume that 
\eqref{case2condition1}{\rm --}\eqref{case2condition2} 
are satisfied. 
Then there exist a nonnegative function 
$E_2:(0,\infty)\to \mathbb{R}$ and 
constants $\ep_1\ge 0$ and $\ep_2> 0$ such that 
\begin{align}\label{purpose;case2}
  \frac d{dt}E_2(t) 
  \le 
  -\ep_1\into u\cd
  - \ep_2 \into 
  \left[
  u\cd^2
  +(v\cd-1)^2
  + \left(w\cd-\frac{\beta}{\gamma}\right)^2
  \right]
\end{align}
holds for all $t>0$. 
Moreover, there exists $C>0$ satisfying 
\begin{align*}
 \int_0^\infty \into u^2 
 + \int_0^\infty \into (v-1)^2
 + \int_0^\infty \into \left(w-\frac{\beta}{\gamma}\right)^2
 \le C. 
\end{align*}
\end{lem}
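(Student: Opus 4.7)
My plan is to adapt the energy method of Lemma \ref{lem;energy} to this regime, with the twist that the target equilibrium $(0, 1, \beta/\gamma)$ has a vanishing $u$-component, so the logarithmic entropy $u - u^\ast - u^\ast\log(u/u^\ast)$ collapses to just $u$. Concretely, I would take
\[
E_2(t) := \into u(\cdot,t) + \frac{a'_1 \mu_1 \delta_1}{a_2 \mu_2}\into \bigl(v - 1 - \log v\bigr)(\cdot,t),
\]
which is nonnegative since $\xi - 1 - \log \xi \ge 0$ for all $\xi > 0$. Differentiating the first term and using $a_1 \ge a'_1$ (to drop a nonnegative contribution), together with $uv = u + u(v-1)$, yields
\[
\tfrac{d}{dt}\into u \le -\mu_1(a'_1-1)\into u - \mu_1 \into u^2 - a'_1\mu_1 \into u(v-1),
\]
while testing the second equation against $1 - 1/v$ produces
\[
\tfrac{d}{dt}\into (v-1-\log v) = -d_2\into \tfrac{|\nabla v|^2}{v^2} + \chi_2 \into \tfrac{\nabla v\cdot\nabla w}{v} - \mu_2 \into (v-1)^2 - \mu_2 a_2 \into u(v-1).
\]

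Combining both with the weight $a'_1\mu_1\delta_1/(a_2\mu_2)$, I would apply Young's inequality to the mixed gradient term with auxiliary parameter $\delta_2$ chosen so that the diffusion term $d_2\into |\nabla v|^2/v^2$ absorbs the resulting $|\nabla v|^2/v^2$ contribution; this forces $\delta_2 > a'_1\mu_1\delta_1\chi_2^2/(4 d_2 a_2 \mu_2)$. The leftover $\delta_2\into |\nabla w|^2$ would then be rewritten via the third equation in the form $-d_3\Delta w = \alpha u + \beta(v-1) - \gamma(w - \beta/\gamma)$, tested against $w - \beta/\gamma$, as
\[
\tfrac{\alpha}{d_3}\into u\bigl(w-\tfrac{\beta}{\gamma}\bigr) + \tfrac{\beta}{d_3}\into (v-1)\bigl(w-\tfrac{\beta}{\gamma}\bigr) - \tfrac{\gamma}{d_3}\into \bigl(w-\tfrac{\beta}{\gamma}\bigr)^2.
\]
The outcome is an estimate of the shape $\tfrac{d}{dt}E_2(t) \le -\mu_1(a'_1-1)\into u + G(t)$, where $G$ is an integrated quadratic form in $u$, $v-1$, $w - \beta/\gamma$.

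To close the argument I would invoke Lemma \ref{lem;justcal} with parameters $a = \mu_1 - \ep_2$, $d = a'_1\mu_1\delta_1/a_2 - \ep_2$, $f = \gamma\delta_2/d_3 - \ep_2$, $b = a'_1\mu_1(1+\delta_1)$, $c = -\alpha\delta_2/d_3$, $e = -\beta\delta_2/d_3$. At $\ep_2 = 0$, the inequality $d - b^2/(4a) > 0$ collapses exactly to \eqref{case2condition1}, and after the cancellation $(a'_1)^2a_2(1+\delta_1)^2\alpha^2$ between $-b^2\alpha^2/(4a)$ and the cross term $(2ae-bc)^2$ expansion, the third inequality reduces to an upper bound on $\delta_2$ whose compatibility with the Young lower bound is precisely \eqref{case2condition2}. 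Continuity in $\ep_2$ then supplies $\ep_2 > 0$, and setting $\ep_1 := \mu_1(a'_1-1) \ge 0$ gives \eqref{purpose;case2}. The integral bound follows by integrating \eqref{purpose;case2} over $(0,\infty)$, using $E_2 \ge 0$ and the finiteness of $E_2(0)$ (after, if necessary, a harmless shift $t \mapsto t + t_0$ with $t_0 > 0$ to exploit the strict positivity of $v$ guaranteed by the parabolic maximum principle). I expect the main obstacle to be the algebraic simplification that matches the third clause of Lemma \ref{lem;justcal} with \eqref{case2condition2} exactly, and the bookkeeping of the surviving linear remainder $-\mu_1(a'_1-1)\into u$, which has no analogue in Lemma \ref{lem;energy} because there the $u$-logarithmic entropy was used.
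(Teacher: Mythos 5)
Your proposal is correct and follows essentially the same route as the paper: the same functional $E_2$, the same use of $a_1\ge a_1'$ to extract the linear term $-(a_1'-1)\mu_1\into u$, the same Young-inequality parameter $\delta_2$ with the identical lower bound, the same substitution of $\into|\nabla w|^2$ via the third equation, and the same closing application of Lemma \ref{lem;justcal} by continuity in $\ep_2$. The only differences are cosmetic bookkeeping (your $\ep_1=\mu_1(a_1'-1)$ and the $\delta_2$ rather than $(1+\delta_1)\delta_2$ coefficients in the quadratic form are in fact the consistent choices, whereas the paper carries a couple of harmless typos from the Case 1 computation).
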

\begin{proof}
Let $\delta_1>0$ and $a_1'\in [1,a_1]$ 
be a constant defined in 
\eqref{case2condition1}{\rm --}\eqref{case2condition2}. 
We first show that the function 
$E_2:(0,\infty)\to \mathbb{R}$ defined as 
\begin{align}\label{defE2}
 E_2:= \into u
 + \frac{a_1'\mu_1\delta_1}{a_2\mu_2}
 \into \left(
 v-1-\log {v}
 \right)
\end{align}
fulfils \eqref{purpose;case2} 
for all $t>0$ with $\ep_1:=a_1'-1$ and some $\ep_2>0$. 
Noting from the relation $a_1'\le a_1$ that 
\begin{align*}
  u(1-u-a_1v)&\le u(1-u-a_1'v)
  \\
  &=  - \mu_1u^2-a_1'\mu_1 u(v-1) -(a_1'-1)\mu_1u,
\end{align*}
from straightforward calculations 
we derive that 
\begin{align}\notag
  \frac d{dt} E_2(t) 
  &= -\mu_1 \into u\cd^2 
  - (1+\delta_1)a_1'\mu_1\into u\cd(v\cd-1) 
\\\notag &\quad\,
  - \frac{a_1'\mu_1\delta_1}{\mu_2}\into (v\cd-1)^2 
  -(a_1'-1)\into u\cd
\\\label{derE2} &\quad \,
  - \frac{d_2a_1'\mu_1 \delta_1}{a_2\mu_2}
    \into \frac{|\nabla v\cd|^2}{v^2}
  + \frac{a_1'\mu_1\chi_2\delta_1}{a_2\mu_2}
    \into \frac{\nabla v\cd\cdot \nabla w\cd}{v}
\end{align}
for all $t>0$. 
Here thanks to 
\eqref{case2condition1}{\rm --}\eqref{case2condition2}, 
we can take $\delta_2>0$ such that  
\begin{align*}
  \frac{a_1'\mu_1 \chi_2^2\delta_1}{4d_2a_2\mu_2}
  < \delta_2 
  < \frac{d_3a_1'\mu_1\gamma (4\delta_1 - (1+\delta_1)^2a_1'a_2)}
    {a_1'\alpha^2 \delta_1 + a_2\beta^2
    -(1+\delta_1) a_1'a_2\alpha \beta}. 
\end{align*}
Invoking the Young inequality, we obtain that 
\begin{align*}
  \frac{a_1'\mu_1 \chi_2\delta_1}{a_2\mu_2} 
  \into \frac{\nabla v\cdot \nabla w}{v} 
  \le \frac{a_1'^2\mu_1^2 \chi_2^2\delta_1^2}{4\delta_2}
  \into \frac{|\nabla v|^2}{v^2}
  + \delta_2 \into |\nabla w|^2. 
\end{align*}
Therefore since the definition of $\delta_2$ yields 
\begin{align*}
      d_2-\frac{a_1'\mu_1\chi_2^2\delta_1}
      {4a_2\mu_2\delta_2}>0, 
\end{align*}
the equation \eqref{derE2} implies that
\begin{align*} 
  \frac{d}{dt}E_2(t) 
  &\le  
   -\ep_1 \into u
   -\mu_1 \into u\cd^2 
  - (1+\delta_1)a_1'\mu_1\into 
  u\cd(v\cd-1) 
\\ 
&\quad\,
  - \frac{a_1'\mu_1\delta_1}{\mu_2}\into (v\cd-1)^2
   + \delta_2 \into |\nabla w\cd|^2
\end{align*}
for all $t>0$, where $\ep_1= a_1'-1$. 
Noting from the third equation in \eqref{cp} that 
\begin{align*}
  \into |\nabla w|^2 
  = \frac \alpha{d_3} 
    \into u\left(w-\frac{\beta}{\gamma}\right)  
    + \frac \beta{d_3} 
    \into (v-1)\left(w-\frac{\beta}{\gamma}\right)
    - \frac \gamma{d_3} 
    \into \left(w-\frac{\beta}{\gamma}\right)^2, 
\end{align*}
we establish that for all $t>0$, 
\begin{align*}
    \frac{d}{dt}E_2(t) \le -\ep_1 \into u + F_2(t) 
    \quad \mbox{for all}\ t>0, 
\end{align*}
where  
\begin{align*}
  F_2(t)&:=
  -\mu_1 \into u\cd^2 
  - (1+\delta_1)a_1\mu_1\into u\cd(v\cd-1) 
\\ 
&\quad\,
  - \frac{a_1\mu_1\delta_1}{\mu_2}\into (v\cd-1)^2
  + \frac{\alpha(1+\delta_1)\delta_2}{d_3} 
    \into u\cd\left(w\cd-\frac{\beta}{\gamma}\right)
\\ 
&\quad \,
  + \frac{\beta(1+\delta_1)\delta_2}{d_3} 
    \into (v\cd-1)\left(w\cd-\frac{\beta}{\gamma}\right)
  - \frac{\gamma(1+\delta_1)\delta_2}{d_3} 
    \into \left(w\cd-\frac{\beta}{\gamma}\right)^2.
\end{align*}
Then by using the same argument as in the proof of 
Lemma \ref{lem;energy} we can see that 
\begin{align*}
  F_2(t) \le 
  - \ep_2 \into 
  \left[
  u\cd^2
  +(v\cd-1)^2
  + \left(w\cd-\frac{\beta}{\gamma}\right)^2
  \right]
\quad\mbox{for all}\ t>0
\end{align*}
with some $\ep_2>0$, 
which means that \eqref{purpose;case2} holds 
with $\ep_1=a_1'-1\ge 0$ and $\ep_2 > 0$. 
\end{proof}
%
%
%
%
Then we will establish the convergence result for 
the solution to \eqref{cp} 
in the case that $a_1\ge 1>a_2$. 
\begin{lem}\label{lem;con;sol;case2}
Assume that 
\eqref{case2condition1}{\rm --}\eqref{case2condition2} 
are satisfied. 
Then we have
\begin{align*}
 \lp{\infty}{u\cd}
 +\lp{\infty}{v\cd-1}
 +\left\|w\cd-\frac{\beta}{\gamma}\right\|_{L^\infty(\Omega)}
 \to 0
\quad
 \mbox{as}\ t\to \infty. 
\end{align*}
\end{lem}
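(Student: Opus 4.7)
The plan is to combine Lemma \ref{lem;forconvergence} with Lemma \ref{lem;energy;case2}, exactly mirroring the proof of Lemma \ref{lem;con;sol} in Case 1. The a priori H\"older regularity hypothesis that was imposed at the start of Section 3 provides a uniform bound
\[
\|u\|_{C^{\theta,\theta/2}(\ol{\Omega}\times[t,t+1])} + \|v\|_{C^{\theta,\theta/2}(\ol{\Omega}\times[t,t+1])} + \|w\|_{C^{\theta,\theta/2}(\ol{\Omega}\times[t,t+1])} \le M
\]
for all $t \ge 1$, which supplies the first assumption of Lemma \ref{lem;forconvergence} when applied separately to $n = u$, $n = v$, or $n = w$.

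Next, I would invoke Lemma \ref{lem;energy;case2}, which already furnishes the required second assumption in the form
\[
\int_0^\infty \into u^2 + \int_0^\infty \into (v-1)^2 + \int_0^\infty \into \Bigl(w - \tfrac{\beta}{\gamma}\Bigr)^2 \le C.
\]
In particular each of the three space-time integrals is finite, so the assumption of Lemma \ref{lem;forconvergence} holds with $N^\ast = 0$ for $u$, with $N^\ast = 1$ for $v$, and with $N^\ast = \beta/\gamma$ for $w$.

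Applying Lemma \ref{lem;forconvergence} componentwise then yields $u(\cdot,t) \to 0$, $v(\cdot,t) \to 1$, and $w(\cdot,t) \to \beta/\gamma$ in $C^0(\ol{\Omega})$ as $t \to \infty$. Summing these three convergences gives exactly the claimed $L^\infty$ statement. There is no real obstacle here since the two preceding lemmas are tailored to combine in this way; the only minor point to verify is that Lemma \ref{lem;forconvergence} is applicable to $w$ at the target $\beta/\gamma$, and this is fine because $(0,1,\beta/\gamma)$ is a spatially homogeneous steady state of \eqref{cp} in the regime $a_1 \ge 1 > a_2$ (it satisfies $\alpha \cdot 0 + \beta \cdot 1 - \gamma \cdot \tfrac{\beta}{\gamma} = 0$), which was implicit in setting up the energy functional $E_2$.
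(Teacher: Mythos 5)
Your proposal is correct and is exactly the paper's argument: the paper's proof is the one-line statement that Lemma \ref{lem;forconvergence} combined with Lemma \ref{lem;energy;case2} yields the claim, and you have simply spelled out how the H\"older bound and the finite space-time integrals supply the two hypotheses componentwise. The only cosmetic remark is that Lemma \ref{lem;forconvergence} is stated with $N^\ast>0$ while you apply it to $u$ with $N^\ast=0$; the paper does the same thing implicitly, and the lemma's proof does not use positivity of $N^\ast$, so this is not a gap.
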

\begin{proof}
A combination of 
Lemmas \ref{lem;forconvergence} and \ref{lem;energy;case2} 
implies this lemma. 
\end{proof}
%
%
%
%
Finally, we shall show two lemmas which 
give asymptotic behavior 
in the case that $a_1> 1>a_2$. 
\begin{lem}\label{lem;decay;L2;case2}
Let $a_1>1$ and $a_2\in (0,1)$. 
Assume that 
\eqref{case2condition1}{\rm --}\eqref{case2condition2} 
are satisfied with $\delta_1>0$ and $a_1'>0$. 
Then there exist $C>0$ and $\ell > 0$ satisfying 
  \begin{align*}
  \|u(t)\|_{L^{2}(\Omega)}
  + \|v(t)-1\|_{L^{2}(\Omega)}
  \leq Ce^{-\ell t}
  \quad\mbox{for all}\ t>0. 
  \end{align*}
\end{lem}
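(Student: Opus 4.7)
The plan is to adapt the exponential-decay strategy of Lemma \ref{lem;decay;L2} to the energy $E_2$ built in Lemma \ref{lem;energy;case2}, exploiting the extra dissipation $-\ep_1\into u$ with $\ep_1 = a_1'-1 > 0$, which is available precisely because the hypothesis $a_1>1$ permits the choice $a_1' \in (1, a_1]$ in \eqref{case2condition1}--\eqref{case2condition2}.

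First I would invoke Lemma \ref{lem;con;sol;case2} to fix $t_0 > 0$ such that $\tfrac12 \le v(x,t) \le \tfrac32$ for all $x \in \ol{\Omega}$ and $t \ge t_0$. Taylor expansion around $v = 1$ of $\phi(v) := v - 1 - \log v$ gives $\phi(1) = \phi'(1) = 0$ and $\phi''(v) = 1/v^2$, so there are positive constants $k_1, k_2$ with $k_1(v-1)^2 \le \phi(v) \le k_2 (v-1)^2$ for $v \in [\tfrac12,\tfrac32]$. Combined with the definition \eqref{defE2} of $E_2$, this yields $c_1, c_2 > 0$ satisfying
\begin{equation*}
c_1 \bigl(\lp{1}{u\cd} + \lp{2}{v\cd-1}^2\bigr) \le E_2(t) \le c_2 \bigl(\lp{1}{u\cd} + \lp{2}{v\cd-1}^2\bigr) \quad\mbox{for all}\ t \ge t_0.
\end{equation*}

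Next I would combine this two-sided comparison with the dissipation inequality furnished by Lemma \ref{lem;energy;case2}; discarding the nonpositive $u^2$ and $(w - \beta/\gamma)^2$ contributions on the right-hand side, it reads
\begin{equation*}
\frac{d}{dt} E_2(t) \le -(a_1'-1) \into u\cd - \ep_2 \into (v\cd-1)^2 \le -\ell_0 \bigl(\lp{1}{u\cd} + \lp{2}{v\cd-1}^2 \bigr),
\end{equation*}
with $\ell_0 := \min\{a_1'-1, \ep_2\} > 0$. Feeding in the upper comparison gives $\frac{d}{dt} E_2(t) \le -(\ell_0/c_2) E_2(t)$ for $t \ge t_0$, so Gronwall's inequality delivers $E_2(t) \le E_2(t_0) e^{-(\ell_0/c_2)(t-t_0)}$, and the lower comparison then transfers this into exponential decay of $\lp{1}{u\cd}$ and $\lp{2}{v\cd-1}$.

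Finally, I would upgrade the $L^1$-decay of $u$ to $L^2$-decay by interpolating against the standing uniform bound $\lp{\infty}{u\cd} \le M$: one has $\lp{2}{u\cd}^2 \le \lp{\infty}{u\cd} \cdot \lp{1}{u\cd} \le M \cdot C e^{-(\ell_0/c_2)(t-t_0)}$, so $\lp{2}{u\cd}$ decays exponentially at half the rate. The only real subtlety I anticipate is the mismatch between the linear $\into u$ term in $E_2$ and the quadratic $L^2$-target for $u$; this forces us either to track an $L^1$-bound separately (as done here via the upper $L^\infty$-bound) or to incorporate a $-\mu_1 \into u^2$ term from \eqref{derE2} directly into the Gronwall loop. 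Either way, once the Taylor comparison is in place the argument is essentially the same one already used for Lemma \ref{lem;decay;L2}.
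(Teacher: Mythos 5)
Your argument is correct and essentially coincides with the paper's: the paper likewise establishes a two-sided equivalence of $E_2$ with a dissipated functional (there $h_1(t)=\into u\cd^2+\into(v\cd-1)^2+(a_1'-1)\into u\cd$) and closes with a Gronwall argument, the choice $a_1'>1$ being exactly what makes the linear $\into u$ term in $E_2$ controllable. Your final interpolation $\lp{2}{u\cd}^2\le\lp{\infty}{u\cd}\,\lp{1}{u\cd}$ is precisely the ingredient the paper hides inside the lower bound $C_1h_1(t)\le E_2(t)$, so the two proofs differ only in bookkeeping.
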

\begin{proof}
In the case that $a_1\ge 1$ and $a_2\in (0,1)$ 
a similar argument as in the proof of 
\cite[Lemmas 4.3]{Mizukami_DCDSB} 
enables us to see that there exist 
$C_1,C_2>0$ and $t_0>0$ such that 
\begin{align*}
C_1 h_1(t) \le E_2(t) \le C_2 h_1(t) 
\quad 
\mbox{for all}\ t>t_0,
\end{align*}
where $E_2$ is the function defined as \eqref{defE2} 
and 
\begin{align*}
  h_1(t):= \into u\cd^2 + \into (v\cd-1)^2 
         + (a_1'-1)\into u\cd. 
\end{align*}
Thus a combination of the above inequality and 
\eqref{purpose;case2} means that  
\begin{align*}
 \frac{d}{dt}E_2(t) \le -C_3E_2(t) 
\end{align*}
holds for all $t>t_0$, which 
together with the same argument as 
in the proof of Lemma \ref{lem;decay;L2} leads to 
the conclusion of this lemma 
in the case that $a_1>1$ and $a_2\in (0,1)$. 
\end{proof}
%
%
%
%
%
\begin{lem}\label{lem;decay;L2;case3}
Let $a_1=1$ and $a_2\in (0,1)$. 
Assume that 
\eqref{case2condition1}{\rm --}\eqref{case2condition2} 
are satisfied. 
Then there exist $C>0$ and $\ell>0$ satisfying 
  \begin{align*}
  \|u(t)\|_{L^{2}(\Omega)}
  + \|v(t)-1\|_{L^{2}(\Omega)}
  \leq \frac{C}{\sqrt{t+2}}
  \quad \mbox{for all}\ t>0. 
  \end{align*}
\end{lem}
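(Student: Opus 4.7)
The plan is to adapt the proof of Lemma~\ref{lem;decay;L2;case2} to the critical case $a_1=1$, where the only admissible choice in \eqref{case2condition1}--\eqref{case2condition2} is $a_1'=1$. With this choice the parameter $\ep_1=a_1'-1$ appearing in Lemma~\ref{lem;energy;case2} vanishes, so the useful linear dissipation $-\ep_1\into u$ is lost and only algebraic decay can be expected.

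First I fix $a_1'=1$ and let $E_2$ denote the Lyapunov functional \eqref{defE2} from Lemma~\ref{lem;energy;case2}; that lemma then gives
\begin{align*}
\frac{d}{dt}E_2(t)\le -\ep_2\into\!\left[u\cd^2+(v\cd-1)^2+\left(w\cd-\frac{\beta}{\gamma}\right)^2\right]
\quad \mbox{for all}\ t>0.
\end{align*}
By Lemma~\ref{lem;con;sol;case2} together with the uniform $L^\infty$-bound on $v$ from Lemma~\ref{lem;Linfty;uvw}, I can fix $t_0>1$ so that $\frac12\le v(\cdot,t)\le K$ on $\ol{\Omega}$ for all $t\ge t_0$, with some $K>0$. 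A Taylor expansion of $f(v):=v-1-\log v$ around $v=1$ on the compact interval $[\tfrac12,K]$ (using $f(1)=0$, $f'(1)=0$, $f''(1)=1$) then yields constants $c_1,c_2>0$ such that
\begin{align*}
c_1\left(\into u + \into (v-1)^2\right)\le E_2(t)\le c_2\left(\into u + \into (v-1)^2\right)
\quad\mbox{for all}\ t\ge t_0.
\end{align*}

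The next step is to turn the dissipation into a Riccati-type inequality for $E_2$ itself. By the Cauchy--Schwarz inequality one has
\begin{align*}
\left(\into u\right)^2\le|\Omega|\into u^2,
\qquad
\left(\into (v-1)^2\right)^2\le|\Omega|\into (v-1)^4\le |\Omega|\lp{\infty}{v\cd-1}^2\into(v-1)^2,
\end{align*}
where the last chain uses the uniform $L^\infty$-bound on $v-1$. Combining these with the upper bound on $E_2$ produces a constant $C'>0$ with
\begin{align*}
E_2(t)^2\le C'\into\!\left[u\cd^2+(v\cd-1)^2\right]\le -\frac{C'}{\ep_2}\frac{d}{dt}E_2(t)
\quad\mbox{for all}\ t\ge t_0,
\end{align*}
hence $\frac{d}{dt}E_2(t)\le -c\, E_2(t)^2$. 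Integrating this scalar inequality yields $E_2(t)\le C_3/(t+2)$ for all $t\ge t_0$, and the same bound extends to $t\in [0,t_0]$ after enlarging the constant using the boundedness of $E_2$.

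Finally, the lower bound on $E_2$ directly gives $\into(v-1)^2\le E_2(t)/c_1\le C/(t+2)$, while the nonnegativity of $u$ and the uniform $L^\infty$-bound from Lemma~\ref{lem;Linfty;uvw} allow me to write
\begin{align*}
\into u^2\le \lp{\infty}{u\cd}\into u\le C\,E_2(t)\le \frac{C}{t+2}
\quad\mbox{for all}\ t>0,
\end{align*}
and taking square roots delivers the stated $1/\sqrt{t+2}$ decay. The main obstacle is precisely the mismatch between the linear-in-$u$ structure of $E_2$ and the quadratic-in-$u$ character of the dissipation: this mismatch rules out an exponential rate and, through Cauchy--Schwarz, forces the Riccati inequality $\dot E_2\lesssim -E_2^2$ that is responsible for the sharp polynomial rate.
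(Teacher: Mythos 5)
Your proof is correct and takes essentially the same route as the paper: both establish the two-sided equivalence of $\into(v-1-\log v)$ with $\into(v-1)^2$, use Cauchy--Schwarz together with the uniform $L^\infty$-bounds to dominate $E_2(t)^2$ by the dissipation term from Lemma~\ref{lem;energy;case2}, integrate the resulting Riccati inequality $\frac{d}{dt}E_2\le -cE_2^2$ to obtain $E_2(t)\le C/(t+2)$, and then recover the $L^2$-rates via $\into u^2\le \lp{\infty}{u\cd}\into u\le CE_2(t)$ and the lower bound on $E_2$.
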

\begin{proof}
First we can verify from the same argument as 
in the proof of \cite[Lemma 3.7]{Mizukami_DCDSB} 
that 
there exist $C_4,C_5>0$ and $t_1>0$ such that  
\begin{align}\label{v;ineq;energy}
  C_4 \into (v\cd-1)^2 
\le 
  \into (v\cd-1-\log v\cd)
\le
  C_5 \into (v\cd-1)^2 
\end{align}
for all $t>t_1$. 
Hence it follows from the Cauchy--Schwarz inequality 
and the boundedness of $v$ that 
\begin{align*}
  E_2(t) 
  &\le \into u\cd 
  + \frac{a_1'\mu_1\delta_1}{a_2\mu_2}
    \into (v\cd-1)^2
\\
  &\le C_6\left(\into u\cd^2\right)^{\frac{1}{2}}
  + C_6 \left(\into (v\cd-1)^2\right)^{\frac 12}
\\
  &\le \sqrt{2}C_6 
  \left(\into u\cd^2 + \into (v\cd-1)^2\right)^{\frac{1}{2}}
\quad \mbox{for all}\ t>t_1, 
\end{align*}
which implies from \eqref{purpose;case2} that 
\begin{align*}
  E_2(t) \le - C_7 E_2(t)^2
  \quad \mbox{for all}\ t>t_1. 
\end{align*}
Therefore we can find $C_8>0$ such that 
\begin{align*}
  E_2(t) \le \frac{C_8}{t+2} 
  \quad \mbox{for all}\ t>t_1. 
\end{align*}
Therefore thanks to the boundedness of $u$ and 
\eqref{v;ineq;energy}, 
we obtain that 
\begin{align*}
  \into u\cd^2 + \into (v\cd-1)^2
  \le C_9 E_2(t) \le \frac{C_8C_9}{t+2} 
  \quad \mbox{for all}\ t>t_1, 
\end{align*}
which proves this lemma. 
\end{proof}
\begin{proof}[{\rm \bf Proof of Theorems \ref{mainth2} and \ref{mainth3}}]
A combination of Lemmas \ref{lem;decay;L2}, \ref{lem;decay;L2;case2}, 
\ref{lem;decay;L2;case3} 
and \ref{L2toLinfty} immediately leads to 
the conclusions of these theorems. 
\end{proof}
%


\bibliographystyle{plain}

\end{document}